\newtheorem{theorem}{Theorem}%[section]
\newtheorem{lemma}[theorem]{Lemma}
\newtheorem{problem}[theorem]{Problem}
\newtheorem{proposition}[theorem]{Proposition}
\newtheorem{corollary}[theorem]{Corollary}
\newtheorem{conjecture}[theorem]{Conjecture}
\newtheorem{definition}[theorem]{Definition}
\newtheorem{remark}[theorem]{Remark}
\numberwithin{equation}{section}
\newcommand{\F}{\mathbb{F}}
\newcommand{\Ext}{{\rm Ext}}
\newcommand{\poly}{{\rm poly}}
\title{On the Paley RIP and Paley graph extractor}
\author{Shohei Satake
\thanks{
Research and Education Institute for Semiconductors and Informatics, Kumamoto University\\
2-39-1, Kurokami, Chuo, Kumamoto, 860-8555, Japan.\\
E-mail: shohei-satake@kumamoto-u.ac.jp
}
}
\date{}
\begin{document}

\maketitle
\begin{abstract}
Constructing explicit RIP matrices is an open problem in compressed sensing theory. In particular, it is quite challenging to construct explicit RIP matrices that break the square-root bottleneck. On the other hand, providing explicit $2$-source extractors is a fundamental problem in theoretical computer science, cryptography and combinatorics. Nowadays, there are only a few known constructions for explicit $2$-source extractors (with negligible errors) that break the half barrier for min-entropy.

In this paper, we establish a new connection between RIP matrices breaking the square-root bottleneck and $2$-source extractors breaking the half barrier for min-entropy. 
Here we focus on an RIP matrix (called the Paley ETF) and a $2$-source extractor (called the Paley graph extractor), where both are defined from quadratic residues over the finite field of odd prime order $p\equiv 1 \pmod{4}$. As a main result, we prove that if the Paley ETF breaks the square-root bottleneck, then the Paley graph extractor breaks the half barrier for min-entropy as well. Since it is widely believed that the Paley ETF breaks the square-root bottleneck, our result accordingly provides a new affirmative intuition on the conjecture for the Paley graph extractor by Benny Chor and Oded Goldreich.
\end{abstract}

\section{Introduction}
In this paper, we establish a new relation between two major conjectures (Conjectures~\ref{conj-PaleyRIP} and \ref{conj-PaleyExt} below) in compressed sensing and theoretical computer science, which both are regarding structures defined from {\it quadratic residues} over finite fields.

\subsection{RIP matrices}
Matrices with {\it restricted isometry property} ({\it RIP}) defined below have fundamental applications to signal processing since, by adopting them, it is possible to measure and recover sparse signals using significantly fewer measurements than the dimension of the signals~\cite{Ca2008}. 

\begin{definition}[Restricted isometry property, RIP]
\label{def-RIP}
Let $\Phi$ be a complex $M \times N$ matrix. Suppose that $K \leq M \leq N$ and $0 \leq \delta<1$. 
Then $\Phi$ is said to have the {\it $(K, \delta)$-restricted isometry property} ({\it RIP}) if 
\begin{equation*}
    (1-\delta)||\mathbf{x}||^2 \leq ||\Phi \mathbf{x}||^2 \leq  (1+\delta)||\mathbf{x}||^2
\end{equation*}
for every $N$-dimensional complex vector $\mathbf{x}$ with at most $K$ non-zero entries. Here $||\cdot||$ denotes the $\ell_2$ norm.
\end{definition}

According to Defintion~\ref{def-RIP}, the following RIP constant is an important constant showing the trade-off of $K$ and $\delta$.

\begin{definition}[RIP constant]
\label{def-RIPconst}
For each $K\geq 1$ and an $M\times N$ matrix, the {\it RIP constant} $\delta_K=\delta_K(\Phi)$ is the minimum real number $\delta \geq 0$ that $\Phi$ has the $(K, \delta)$-RIP.
\end{definition}

On the other hand, it is known that the problem checking whether a given matrix has RIP is NP-hard~\cite{BDMS2013}.
Thus many publications have attempted to give {\it explicit} constructions of matrices having RIP. 
However, for most of known such constructions, it is only proved that $(K, \delta)$-RIP with $K=O(\sqrt{M})$, following from the RIP evaluation via the mutual coherence and the Welch bound~\cite{W1974} (see also \cite{BDFKK2011}). 
% Throughout this paper, we assume that all matrices have column vectors with unit $\ell_2$-norm.
% Most of known constructions use the {\it coherence} $\mu(\Phi)$ of an $M \times N$ matrix $\Phi$ with column vectors $\psi_1, \ldots, \psi_N$, where 
% \begin{equation}
%     \mu(\Phi):=\max_{1 \leq j\neq k \leq N}|\langle \psi_j, \psi_k \rangle|,
% \end{equation}
% and $\langle \cdot, \cdot \rangle$ denotes the standard inner product in the Hilbert space $\mathbb{C}^M$.
% It can be proved (e.g. \cite{BDFKK2011}) that if $\mu(\Phi)=\mu$, then $\Phi$ has $(K, (K-1)\mu)$-RIP. 
% This means that for a matrix $\Phi$, we have
% \begin{align}
% \label{eq-coherence}
%     \delta_K\leq (K-1)\mu,
% \end{align}
% which implies $(K, \delta)$-RIP with only $K=O(\sqrt{M})$, following from the below Welch bound (\ref{eq-Welch}) in \cite{W1974}. 
% \begin{equation}
% \label{eq-Welch}
%     \mu(\Phi) \geq \sqrt{\frac{N-M}{M(N-1)}}.
% \end{equation}
This barrier for the magnitude of the order of $K$ is called the {\it square-root bottleneck} or {\it quadratic bottleneck}.
From this situation, the following problem arises.

\begin{problem}[\cite{BDFKK2011}]
\label{prob-beatsqrt}
Construct an explicit $M \times N$ matrix $\Phi$ having the $(K, \delta)$-RIP with $K=\Omega(M^{\gamma})$ for some $\gamma>1/2$. %and $\delta<\sqrt{2}-1$.
\end{problem}
To our best knowledge, the first (unconditional) solution to this problem was given by Bourgain, Dilworth, Ford, Konyagin and Kutzarova~\cite{BDFKK2011}.
By applying recent progresses on additive combinatorics, the authors of \cite{FKS2022} provided the state-of-the-art construction for explicit $M\times N$ matrices having $(M^{1/2+\kappa}, M^{-\mu})$-RIP with $\kappa=0.815\times 10^{-7}$ and some $\mu>0$.

On the other hand, there is another good candidate for RIP matrix, namely, the {\it Paley ETF} $\Phi_p$ which is a $(p+1)/2 \times (p+1)$ matrix defined by quadratic residues modulo a prime $p\equiv 1 \pmod{4}$ (see Definition~\ref{def-paley-matrix}).
Indeed, it was conjectured in \cite{BFMW2013} (see also \cite{BMM2017}, \cite{S2021}) that the RIP of $\Phi_p$ would be superior than the state-of-the-art construction.

\begin{conjecture}[\cite{BFMW2013}, see also Conjecture~\ref{conj-RIPconst} in Section~\ref{sect-RIPconst}]
\label{conj-PaleyRIP}
For any sufficiently large prime $p\equiv 1 \pmod{4}$,
the Paley ETF $\Phi_p$ has the $(K, \delta)$-RIP with
$$
K=\frac{c_1 p}{\log^{c_2} p}
$$
for constant numbers $c_1, c_2>0$ and $0<\delta<\sqrt{2}-1$.
\end{conjecture}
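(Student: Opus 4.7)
The plan is to attack Conjecture~\ref{conj-PaleyRIP} via a moment method combined with character sum estimates for the quadratic character $\chi$ modulo $p$. First, since the Paley ETF $\Phi_p$ has entries (after normalisation) expressible through $\chi$ on $\F_p^{*}$, for any column subset $S$ of size $K$ the restricted Gram matrix $G_S := \Phi_S^{*}\Phi_S - I_K$ has off-diagonal entries that are short character sums. The RIP constant $\delta_K(\Phi_p)$ equals the supremum of $\|G_S\|_{\mathrm{op}}$ over all $S$ with $|S|=K$, and the goal is to show this supremum is less than $\sqrt{2}-1$ even when $K = c_1 p / \log^{c_2} p$, well beyond what the mutual-coherence / Welch-bound argument can yield.

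Second, I would apply the polynomial moment method: $\|G_S\|_{\mathrm{op}}^{2k} \leq \mathrm{Tr}(G_S^{2k})$ for any positive integer $k$, so that a union bound over the $\binom{N}{K}$ subsets $S$ reduces the task to controlling $\sum_{|S|=K} \mathrm{Tr}(G_S^{2k})$. Expanding the trace produces a sum indexed by closed walks of length $2k$ in the column index set, which after collecting $\chi$-factors becomes a sum $\sum_{x\in\F_p} \chi(f(x))$ with $f$ a polynomial of degree at most $2k$ depending on the walk's combinatorial type. The Weil bound furnishes cancellation $O(k\sqrt{p})$ for every walk whose associated $f$ is not a perfect square, so the counting task is to bound the contribution of the walks for which $f$ \emph{is} a perfect square.

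Third, following the flat-RIP framework developed in~\cite{BFMW2013,BMM2017}, the whole argument can be reorganised so as to require only power-saving estimates on bilinear (and higher-order) character sums of the form $\sum_{a\in A,\, b\in B} \chi(a-b)\chi(a-c)$, uniformly over structured subsets $A,B\subset\F_p$. If such cancellation could be established when $|A|,|B|$ are of order $p/\mathrm{polylog}(p)$, then an averaging and absorption argument would upgrade the $k$-th moment bound into the uniform operator-norm bound needed for Conjecture~\ref{conj-PaleyRIP}.

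The hard part --- and the reason the conjecture is still open --- is precisely this character-sum estimate on large but combinatorially arbitrary subsets. The Weil bound becomes ineffective once $|A|\gg\sqrt{p}$, and classical bounds of Burgess type are not sharp enough in the RIP regime either. A successful attack would likely need either a structure-versus-pseudorandomness dichotomy (sets $A$ with anomalously large bias must be additively structured, and additively structured sets can then be handled by additive-combinatorial machinery in the spirit of~\cite{BDFKK2011,FKS2022}), or a genuinely new analytic input on the pseudorandomness of the quadratic residues --- on par with GRH-type hypotheses. I expect this obstruction to be the principal barrier to any full proof.
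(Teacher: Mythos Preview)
The statement you are addressing is a \emph{conjecture}, not a theorem: the paper does not prove Conjecture~\ref{conj-PaleyRIP} (nor does the source~\cite{BFMW2013}); it merely records it and then uses it as a hypothesis in Section~\ref{sect-RIPconst}. There is therefore no proof in the paper to compare your proposal against.

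Your write-up is not a proof either, and you say so yourself: the final paragraph correctly identifies the missing ingredient --- a power-saving bound for bilinear character sums over arbitrary subsets of size $p/\mathrm{polylog}(p)$ --- as the principal barrier, and this is precisely why the conjecture remains open. What you have produced is a reasonable high-level summary of the flat-RIP heuristic from~\cite{BFMW2013} and the moment/trace machinery of~\cite{BDFKK2011}, but the reduction to that character-sum estimate is the content of the problem rather than a step toward its resolution. In short: there is no gap to diagnose beyond the one you already flag, because neither you nor the paper is claiming a proof.
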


% Under a number-theoretic conjecture shown in Section~\ref{sect-prelim}, 
% Bandeira, Mixon and Moreira~\cite{BMM2017} proved that when $p \equiv 1 \pmod{4}$, the Paley ETF has $(K, \delta)$-RIP with $K=\Omega(p^{\gamma})$ and $\delta=o(1)$ for some $\gamma>1/2$, which provides a conditional solution to Problem~\ref{prob-beatsqrt}.
% The above conditional estimate of RIP of the Paley ETF has been extended to any sufficiently large primes $p$ employing the Paley graph conjecture in \cite{S2022}.

\subsection{Randomness extractors}

A {\it randomness extractor} is a fundamental object in theoretical computer science and cryptography.
Informally, a randomness extractor is a deterministic function which takes inputs from an unknown source (with sufficient min-entropy, defined below) and outputs bits over nearly-uniform distribution.
Hereafter let $\Omega$ be a non-empty finite set and $n=\lceil \log_2 |\Omega| \rceil$ (that is, $n$ is the least number of bits to express the size of $\Omega$).

\begin{definition}[Min-entropy]
For a source $X$ on $\Omega$, its {\it min-entropy} $H_{\infty}(X)$ is defined as
\begin{align*}
    H_{\infty}(X):=\min_{x\in \Omega}(-\log \Pr[X=x]).
\end{align*}
%The {\it min-entropy rate} of $X$ is $H_{\infty}(X)/ \log_2 \Omega$. 
%We say a source $X$ on $\{0,1\}^n$ is an {\it $(n, k)$-source} if $H_{\infty}(X)\geq k$.
\end{definition}
If $H_{\infty}(X)$ is small, then the corresponding distribution is ``far'' from the uniform distribution, which is typical in the scenario of computer science and cryptography. 
This is why we want to deal with sources having {\it small} min-entropy. 

One of well-studied randomness extractors is a {\it $2$-source extractor} which takes inputs from two independent sources.
Towards the precise definition of $2$-source extractors, we also introduce the statistical distance which measures how two probability distributions are close.

\begin{definition}[Statistical distance]
Let $\mathcal{D}_1$ and $\mathcal{D}_2$ be two probability distributions over $\Omega$.
Then the {\it statistical distance} between $\mathcal{D}_1$ and $\mathcal{D}_2$ is defined as
\begin{align*}
  |\mathcal{D}_1-\mathcal{D}_2|
  :=\frac{1}{2}\sum_{d \in \Omega}
  \Bigl| \Pr[\mathcal{D}_1=d]-\Pr[\mathcal{D}_2=d] \Bigr|.  
\end{align*}
For $\epsilon>0$, we write $\mathcal{D}_1 \approx_{\epsilon} \mathcal{D}_2$ if $|\mathcal{D}_1-\mathcal{D}_2|\leq \epsilon$.
\end{definition}

Now we introduce the precise definition of $2$-source extractors.
In this paper we focus on $2$-source extractors with $1$-bit outputs.
\begin{definition}[$2$-source extractors with $1$-bit outputs]
A function $\Ext: \Omega \times \Omega \to \{0, 1\}$ is a {\it $2$-source extractor for min-entropy $k$ and error $\epsilon$} if for any two independent sources $X$ and $Y$ on $\Omega$ with $H_{\infty}(X)\geq k$ and $H_{\infty}(Y)\geq k$, we have
\begin{align*}
    \Ext(X,Y) \approx_\epsilon \mathcal{U}
\end{align*}
where $\mathcal{U}$ denotes the uniform distribution over $\{0,1\}$.
Here we say $\Ext$ is a {\it $2$-source $(k,\epsilon)$-extractor}.
\end{definition}

Note that for applications to computer science and cryptography, it is desired that the error $\epsilon$ is {\it negligible}, that is $\epsilon <\exp(-cn)$ for some $c>0$.
Whereas it is well-known that a probabilistic argument shows the existence of $2$-source $(k,\epsilon)$-extractors for $k\geq \log n+2\log(1/\epsilon)+1$, it is open and quite challenging to provide {\it explicit} $2$-source $(k,\epsilon)$-extractors for small $k$, and, hopefully, negligible $\epsilon$.
In \cite{CG1988}, Chor and Goldreich constructed explicit $2$-source $(k,\epsilon)$-extractors for $k>n/2$ and negligible $\epsilon$, and since then there was no constructions for $k\leq n/2$ for two decades.
This barrier for $k$ is referred as the {\it half barrier}, and the following problem arises.  

\begin{problem}[e.g. \cite{B2005}, \cite{CZ2019}, \cite{CG1988}]
Provide explicit $2$-source $(k,\epsilon)$-extractors for $k<n/2$ and negligible $\epsilon$.
\end{problem}

The first affirmative solution was provided by Bourgain~\cite{B2005} based on additive combinatorics, where he constructed explicit $2$-source $((1/2-\sigma)n,\epsilon)$-extractors for some small constant $\sigma>0$ and negligible $\epsilon$ (see also \cite{Rao2007}). In \cite{L2019} Lewko presented explicit $2$-source $(4n/9,\epsilon)$-extractors for negligible $\epsilon$ (see also \cite{L2020}).
On the other hand, if one allows {\it non-negligible} error $\epsilon$ (i.e. $\epsilon<n^{-c}$ for some $c>0$, even or $\epsilon<1$ is a constant), then there are constructions for much smaller min-entropy, which provide the best known construction for Ramsey graphs (e.g. \cite{CZ2019}).
A significant breakthrough was obtained by Chattopadhyay and Zuckerman~\cite{CZ2019} who constructed explicit $2$-source $(k,\epsilon)$-extractors for $k=\log^{56} (n/\epsilon)$, where $\epsilon$ is non-negligible.
Recently Li~\cite{L2023} constructed explicit $2$-source $(k,\epsilon)$-extractors for $k>c\log n$ for some $c>1$ and any constant $\epsilon>0$.

Now let $\Omega$ be the field $\F_p$ of odd prime order $p$ (and hence $n=\lceil \log_2 p \rceil$). 
Then the {\it Paley graph extractor} $\Ext_p$ is a $2$-source extractor defined by the quadratic multiplicative character of $\F_p$ (see Definition~\ref{def-PaleyExt}).
This extractor was introduced by Chor and Goldreich~\cite{CG1988} who made the following conjecture.  

\begin{conjecture}[\cite{CG1988}]
\label{conj-PaleyExt}
The Paley graph extractor $\Ext_p$ is a $2$-source $(\alpha n, \epsilon)$-extractor for {\it any} $0<\alpha<1$ and some negligible $\epsilon$
\end{conjecture}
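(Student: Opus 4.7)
The plan is to follow the standard reduction of 2-source extraction to bilinear character sum estimates, and then to push those estimates below the half barrier by extending the RIP-based framework of this paper to its strongest form.

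First I would rewrite the extractor property as a character sum. For the Paley graph extractor $\Ext_p$, being a 2-source $(\alpha n, \epsilon)$-extractor is equivalent, up to constants, to proving
\[
\Bigl|\sum_{x,y\in\F_p}f(x)g(y)\chi(x-y)\Bigr|\leq 2\epsilon
\]
for every pair of probability distributions $f,g$ on $\F_p$ with $\|f\|_\infty,\|g\|_\infty\leq p^{-\alpha}$, where $\chi$ is the quadratic character modulo $p$. Writing $M$ for the $p\times p$ matrix with $(x,y)$-entry $\chi(x-y)$, the task becomes bounding the bilinear form $\langle Mf,g\rangle$ on the convex polytope of flat distributions on $\F_p$.

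Second I would analyse the spectrum of $M$. Since $M$ is circulant, its eigenvalues are Gauss sums, each of modulus $\sqrt{p}$. The naive spectral bound combined with $\|f\|_2\leq\sqrt{\|f\|_\infty\|f\|_1}\leq p^{-\alpha/2}$ yields only $p^{1/2-\alpha}$, which is trivial when $\alpha\leq 1/2$. This is precisely the half barrier, and confirms that no purely spectral argument can succeed; one must exploit the non-negativity and $\ell_1$-normalisation of $f,g$ beyond what the singular values of $M$ can see.

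Third, I would iterate the RIP-to-extractor reduction introduced in this paper. Conjecture~\ref{conj-PaleyRIP} supplies $(K,\delta)$-RIP for the Paley ETF with $K=c_1 p/\log^{c_2}p$, i.e.\ restricted almost-isometries of nearly linear size sitting inside $\Phi_p$. Combined with a dyadic decomposition of $f$ and $g$ into flat slices according to the level sets of their densities, repeated application of the RIP inequality to each slice pair ought to remove the trivial $\sqrt{p}$ loss in the spectral bound and replace it by a polylogarithmic one. If this can be made to work for dyadic widths as small as $p^{-1+o(1)}$, it would yield error $\epsilon=p^{-\Omega(1)}$ for every min-entropy rate $\alpha>0$, which is exactly Conjecture~\ref{conj-PaleyExt}.

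The main obstacle is that such a statement is morally equivalent to the bilinear character sum estimate
\[
\Bigl|\sum_{a\in A,\,b\in B}\chi(a-b)\Bigr|=o(|A|\,|B|)
\]
for arbitrary sets $A,B\subseteq\F_p$ of size as small as $p^\alpha$ with $\alpha>0$ fixed but arbitrarily small. Such a bound lies beyond Weil-type estimates, beyond the Bourgain--Katz--Tao sum-product machinery (which only handles $\alpha$ bounded away from $0$ by a small universal constant), and beyond every current RIP construction; it would resolve long-standing open problems on Dirichlet character sums over very short intervals and on least quadratic non-residues. Consequently, while the RIP-conditional framework of this paper outlines a genuine pathway and one can realistically hope to shrink the admissible rate $\alpha$ substantially, closing the entire interval $0<\alpha<1$ will plausibly require a further input fusing the RIP viewpoint with the multiplicative-combinatorial structure of $\F_p^\times$.
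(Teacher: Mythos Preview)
The statement you are addressing is a \emph{conjecture}; the paper does not prove it unconditionally, and your final paragraph correctly recognises that an unconditional proof is out of reach with current methods. What the paper does establish is the conditional implication: assuming Conjecture~\ref{conj-RIPconst} (the quantitatively strong form of the Paley RIP conjecture), Conjecture~\ref{conj-PaleyExt} follows in full (Corollary~\ref{cor-4}). So the relevant comparison is between your conditional sketch and the paper's conditional argument.

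Your sketch aims at the same implication but is vague precisely at the decisive step. The reduction from general min-entropy sources to flat sources via dyadic level-set decomposition is already the content of Proposition~\ref{prop-Ext} (Chor--Goldreich), so after that reduction one is left with bounding $\sum_{a\in A,\,b\in B}\chi(a-b)$ for arbitrary sets $A,B$ of size $\geq p^\alpha$. You write that ``repeated application of the RIP inequality to each slice pair ought to remove the trivial $\sqrt{p}$ loss,'' but the RIP hypothesis does not directly control a bilinear sum over two different index sets: it bounds $\|\Phi_p\mathbf{1}_U\|^2$, which through Lemma~\ref{lem-tech} translates only into a bound on the \emph{symmetric} sum $\sum_{u,v\in U}\chi(u-v)$ over a single set $U$. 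The paper supplies the missing bridge via a polarisation/inclusion--exclusion identity expressing $\sum_{s\in S,\,t\in T}\chi(s-t)$ as a signed combination of symmetric sums over $S\cup T$, $S\setminus T$, $T\setminus S$ (equation~(\ref{eq-sum1})). Together with Karatsuba's estimate (Theorem~\ref{thm-Karatsuba}) for the regime where one set exceeds $p^{1/2+\varepsilon}$, and the freedom under Conjecture~\ref{conj-RIPconst} to take $\tau$ arbitrarily close to $1/2$ by shrinking $\theta$, this yields $\mathcal{P}(\alpha,\beta)$ for every $\alpha>0$ and hence the conditional form of Conjecture~\ref{conj-PaleyExt}. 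Your outline would become a genuine conditional proof once this polarisation step is inserted; without it, the RIP hypothesis alone does not obviously act on the off-diagonal bilinear form.
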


Although this conjecture is affirmatively confirmed only for $\alpha>1/2$ (\cite{CG1988}), it is even believed (e.g. \cite{M2016}) in number theory that $\Ext_p$ is a $(\poly \log n, \epsilon)$-extractor where $\epsilon=o(1)$.
Hence, even after the state-of-the-art constructions (e.g. \cite{CZ2019}, \cite{L2023}), investigating the Paley graph extractor is still very interesting and quite challenging.

\subsection{Main results}
In this paper, we focus on the Paley ETF and Paley graph extractor, and establish a new relationship between Conjectures~\ref{conj-PaleyRIP} and \ref{conj-PaleyExt} which are seemingly unrelated and have been investigated independently.
The main results of this paper are summarized as follows.

\begin{itemize}
    \item {\it If the Paley ETF $\Phi_p$ is an RIP matrix breaking the square-root bottleneck, then the Paley graph extractor $\Ext_p$ is a $2$-source extractor breaking the half barrier, having a negligible error (see Theorem~\ref{thm-main-intro}, Corollary~\ref{cor-2}). \\
    % The Paley ETF $\Phi_p$ has the $(p^{1/2+\varepsilon}, p^{-\tau})$-RIP for any small constant $\varepsilon>0$ and some constant $\tau>0$, then the Paley graph extractor $\Ext_p$ is an $(\alpha n, p^{-\beta})$-extractor, where $n=\lceil \log_2 p \rceil$, $0<\alpha<1/2$ and $\beta>0$ are some constant numbers.
    \item Furthermore, Conjecture~\ref{conj-PaleyRIP} (more precisely, Conjecture~\ref{conj-RIPconst} in Section~\ref{sect-RIPconst}) implies Conjecture~\ref{conj-PaleyExt} (see Corollary~\ref{cor-4}).}
\end{itemize}

The rest of this paper is organized as follows.
In Section~\ref{sect-prelim}, we briefly review some necessary mathematical notations as well as the precise definitions for the Paley ETF and Paley graph extractor.
Sections~\ref{sect-main}, \ref{sect-main-tech} and \ref{sect-RIPconst} consists of the main body of this paper.
Section~\ref{sect-main} presents the first main result (Theorem~\ref{thm-main-intro}) and its corollaries, and Section~\ref{sect-main-tech} is devoted for the proof of Theorem~\ref{thm-main-intro}.
Finally, Section~\ref{sect-RIPconst} presents the second main result (Corollary~\ref{cor-4}).

\section{Preliminaries}
\label{sect-prelim}
Let $p$ denote an odd prime number. %such that $p\equiv 1 \pmod{4}$.
Let $\mathbb{F}_p$ be the finite field with $p$ elements which can be identified to the residue ring $\mathbb{Z}/p\mathbb{Z}$.
It is well-known that the multiplicative group of $\mathbb{F}_p$, denoted by $\mathbb{F}_p^*$, is a cyclic group of order $p-1$, consisting of all non-zero elements of $\mathbb{F}_p$.
A non-zero element $a \in \mathbb{F}_p$ is called a {\it quadratic residue modulo $p$} if the equation $X^2 \equiv a \pmod{p}$ has non-zero solutions.
Note that there exist exactly $(p-1)/2$ quadratic residues modulo $p$.
The {\it canonical additive character} $\psi$ of $\mathbb{F}_p$ is a map from $\mathbb{F}_p$ to the unit circle in $\mathbb{C}$ such that $\psi(x):=\exp(\frac{2 \pi \sqrt{-1}}{p} \cdot x)$ for all $x \in \mathbb{F}_p$.
Notice that for every pair of $x, y \in \mathbb{F}_p$, we have $\psi(x+y)=\psi(x)\psi(y)$.
A {\it quadratic multiplicative character} $\chi$ of $\mathbb{F}_p$ is a map from $\mathbb{F}_p$ to $\{0, \pm 1\}$ defined as
\vspace{-0.1cm}
\begin{equation*}
\chi(x):=
    \begin{cases}
    0  & x=0;\\
    1  & \text{$x$ is a quadratic residue modulo $p$};\\
    -1 & \text{otherwise}.
    \end{cases}
\end{equation*}
%\vspace{-0.1cm}
Notice that $\chi(xy)=\chi(x)\chi(y)$ for every pair of $x, y \in \mathbb{F}_p$.

Also we introduce the following evaluation for the quadratic Gauss sum.
\begin{lemma}[\cite{LN1994}]
\label{lem-Gauss}
For a prime $p\equiv 1 \pmod{4}$ and $a \in \mathbb{F}_p^*$, 
\begin{equation*}
\sum_{x \in \mathbb{F}_p}\psi(ax^2)=\chi(a)\sqrt{p}.
\end{equation*}
\end{lemma}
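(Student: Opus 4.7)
The plan is to reduce the quadratic exponential sum $S_a := \sum_{x \in \F_p} \psi(ax^2)$ to a classical Gauss sum of the form $G := \sum_{z \in \F_p^*} \chi(z)\psi(z)$, and then to evaluate $G$ using standard character-sum identities together with Gauss's sign theorem.

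First I would partition the $x$'s according to the common value $y := x^2$. Since the number of preimages of $y \in \F_p^*$ under squaring is $1+\chi(y)$ (and is $1$ for $y=0$), combining this with the orthogonality relation $\sum_{y\in \F_p}\psi(ay)=0$ (valid for $a\neq 0$) rewrites $S_a$ as $\sum_{y\in\F_p^*}\chi(y)\psi(ay)$. A change of variable $z=ay$ together with the identity $\chi(a^{-1})=\chi(a)$ then gives $S_a=\chi(a)G$, so the lemma reduces to showing $G=\sqrt{p}$ when $p\equiv 1\pmod 4$.

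Next I would compute $G^2$ directly. Expanding the product and substituting $y=tx$ decouples the double sum into an outer sum over $t\in \F_p^*$ and an inner additive-character sum in $x$; the latter equals $p-1$ when $t=-1$ and equals $-1$ otherwise. Using $\sum_{t\in \F_p^*}\chi(t)=0$ this yields $G^2=\chi(-1)\,p$, and Euler's criterion $\chi(-1)=(-1)^{(p-1)/2}$ combined with $p\equiv 1 \pmod 4$ forces $G^2=p$, hence $G=\pm \sqrt{p}$.

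The hard part, and the only non-routine step, is pinning down the plus sign. This is the celebrated sign determination of Gauss, famously resistant to short proofs; I would simply invoke it (the standard derivations proceed via Poisson summation applied to a discrete Gaussian, or via Dirichlet's finite-Fourier-transform argument) to conclude $G=+\sqrt{p}$ in the case $p\equiv 1\pmod 4$. Chaining the two parts then yields $S_a=\chi(a)\sqrt{p}$, as claimed. Since this is an auxiliary lemma cited from \cite{LN1994}, I would feel comfortable quoting the sign determination rather than reproducing it.
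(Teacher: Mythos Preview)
Your proposal is correct and is the standard textbook derivation of the quadratic Gauss sum. Note, however, that the paper does not actually supply its own proof of this lemma: it simply quotes the result from \cite{LN1994} and moves on. So there is nothing to compare against; your argument is precisely the kind of proof one finds in Lidl--Niederreiter, and your decision to invoke (rather than reproduce) the sign determination is entirely in keeping with the paper's treatment of this lemma as a cited background fact.
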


Now we are ready to define the Paley ETF.

\begin{definition}[Paley ETF, \cite{BMM2017}, \cite{R2007}, \cite{Z1999}]
\label{def-paley-matrix}
For a prime $p\equiv 1 \pmod{4}$ let $Q_p$ be the set of quadratic residues modulo $p$. 
Suppose that elements of $\mathbb{F}_p$ and $Q_p$ are labelled as $\mathbb{F}_p=\{0=a_1, a_2, \ldots, a_p\}$ and $Q_p=\{b_1, b_2, \ldots, b_{(p-1)/2}\}$, respectively.
Then the {\it Paley ETF} $\Phi_p$ is a $(p+1)/2 \times (p+1)$ complex matrix of the following form.
\begin{equation*}
  \Phi_p := 
  \left[
    \begin{array}{ccccc}
      \frac{1}{\sqrt{p}} & \frac{1}{\sqrt{p}} & \ldots & \frac{1}{\sqrt{p}} & 1 \\ \\
      \sqrt{\frac{2}{p}} & \sqrt{\frac{2}{p}}\psi(b_1a_{2}) & \ldots & \sqrt{\frac{2}{p}}\psi(b_1a_{p}) & 0 \\ \\
      \sqrt{\frac{2}{p}} & \sqrt{\frac{2}{p}}\psi(b_2a_{2}) & \ldots & \sqrt{\frac{2}{p}}\psi(b_2a_{p}) & 0 \\ \\
      \vdots & \vdots & \ddots & \vdots & \vdots \\ \\
      \sqrt{\frac{2}{p}} & \sqrt{\frac{2}{p}}\psi \bigl(b_{\frac{p-1}{2}}a_{2} \bigr) & \ldots & \sqrt{\frac{2}{p}}\psi \bigl(b_{\frac{p-1}{2}}a_{p} \bigr) & 0
    \end{array}
  \right]
\end{equation*}
\end{definition}
%It is not difficult to check that each column of $\Phi_p$ has $\ell_2$-norm $1$.
As shown in \cite{R2007}, $\Phi_p$ is an equiangular tight frame (ETF). %, $\Phi_p$ has the optimal coherence with respect to the Welch bound (\cite{W1974}).

\begin{lemma}
\label{lem-inner}
Let $\phi_i$ be the $i$-th column of $\Phi_p$. Then, for each $1 \leq i, j \leq p$, 
\begin{align*}
    \langle \phi_i, \phi_j \rangle
= \left\{
\begin{array}{ll}
1 & i=j;\\
 \\
\frac{1}{\sqrt{p}} \cdot \chi(a_i-a_j) & i\neq j.
\end{array}
\right.
\end{align*}
\end{lemma}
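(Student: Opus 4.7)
The plan is to compute the inner product directly from the explicit entries of $\Phi_p$, splitting the diagonal case $i=j$ from the off-diagonal case $i\neq j$, and handling the latter by rewriting a sum over quadratic residues as a sum over squares so that the Gauss sum (Lemma~\ref{lem-Gauss}) can be applied.

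For the diagonal case, observe that the $i$-th column with $1\le i\le p$ has one entry $1/\sqrt{p}$ in the first coordinate and $(p-1)/2$ entries of the form $\sqrt{2/p}\,\psi(b_k a_i)$. Since $|\psi(\,\cdot\,)|=1$, each of those entries has squared modulus $2/p$, so $\|\phi_i\|^2 = 1/p + \frac{p-1}{2}\cdot 2/p = 1$, giving the first case.

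For the off-diagonal case, set $a := a_i - a_j \in \mathbb{F}_p^*$. Using $\overline{\psi(x)} = \psi(-x)$ together with multiplicativity of $\psi$ gives
\begin{equation*}
\langle \phi_i,\phi_j\rangle \;=\; \frac{1}{p} \;+\; \frac{2}{p}\sum_{b\in Q_p}\psi(ab).
\end{equation*}
Now I would use the fact that the squaring map $x\mapsto x^2$ sends $\mathbb{F}_p^*$ two-to-one onto $Q_p$, so
\begin{equation*}
\sum_{b\in Q_p}\psi(ab) \;=\; \tfrac{1}{2}\sum_{x\in \mathbb{F}_p^*}\psi(ax^2)
\;=\; \tfrac{1}{2}\Bigl(\sum_{x\in \mathbb{F}_p}\psi(ax^2) - 1\Bigr).
\end{equation*}
Applying Lemma~\ref{lem-Gauss} to the inner sum yields $\sum_{b\in Q_p}\psi(ab) = \frac{1}{2}(\chi(a)\sqrt{p}-1)$, and substituting back gives $\langle \phi_i,\phi_j\rangle = \chi(a_i-a_j)/\sqrt{p}$, as required.

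There is no real obstacle here: the argument is a direct computation, and the only non-trivial ingredient is the Gauss sum evaluation, which is quoted as Lemma~\ref{lem-Gauss}. The one place to be slightly careful is the two-to-one reduction from $Q_p$ to $\mathbb{F}_p^*$ (accounting for the two square roots of every quadratic residue) and remembering to subtract the $x=0$ term when moving between sums over $\mathbb{F}_p^*$ and $\mathbb{F}_p$; otherwise the constants $1/p$ and $2/p$ will not cancel correctly.
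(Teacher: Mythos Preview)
Your proof is correct and follows essentially the same approach as the paper: expand the inner product, convert the sum over $Q_p$ into a sum of $\psi(ax^2)$ over $\mathbb{F}_p^*$ via the two-to-one squaring map, then apply Lemma~\ref{lem-Gauss}. The only cosmetic difference is that the paper absorbs the constant $1/p$ directly as the $x=0$ term of $\sum_{x\in\mathbb{F}_p}\psi(ax^2)$, whereas you subtract the $x=0$ term explicitly and let it cancel against the $1/p$.
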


\begin{proof}
Since it is obvious when $i=j$, suppose that $i\neq j$.
For every $1 \leq k \leq (p-1)/2$, the equation $X^2 \equiv b_k \pmod{p}$ has exactly two distinct non-zero solutions by the definition of $b_k$.
Then it follows from Lemma~\ref{lem-Gauss} that
\begin{align*}
%\begin{split}
    \langle \phi_i, \phi_j \rangle
    &=\frac{1}{p}+\frac{2}{p}\sum_{k=1}^{\frac{p-1}{2}}\psi\bigl((a_i-a_j)b_k \bigr)\\
    &=\frac{1}{p}+\frac{2}{p}\cdot \frac{1}{2}\sum_{x \in \mathbb{F}_p^*}\psi\bigl((a_i-a_j)x^2 \bigr)\\
    &=\frac{1}{p}\sum_{x \in \mathbb{F}_p}\psi\bigl((a_i-a_j)x^2 \bigr)\\
    &=\frac{1}{p} \cdot \chi(a_i-a_j)\sqrt{p}=\frac{1}{\sqrt{p}} \cdot \chi(a_i-a_j).
%\end{split}
\end{align*}
\end{proof}
The next definition is for the Paley graph extractor.
\begin{definition}[Paley graph extractor, \cite{CG1988}]
\label{def-PaleyExt}
For an odd prime $p$, the {\it Paley graph extractor} $\Ext_p : \F_p \times \F_p \to \{0,1\}$ is a function defined as 
    \begin{equation*}
\Ext_p(x,y):=
    \begin{cases}
    1  & x=y;\\
    \frac{1}{2}(\chi(x-y)+1)  & \text{$x\neq y$}.
    \end{cases}
\end{equation*}
\end{definition}

The following property is convenient for investigating the Paley graph extractor.

\begin{definition}
\label{def-pgc}
Let $p$ be an odd prime.
For $0<\alpha \leq 1$ and $\beta>0$, we say that the property $\mathcal{P}(\alpha, \beta)$ holds if for every pair of $S, T \subset \mathbb{F}_p$ with $|S|, |T|>p^{\alpha}$,
\begin{equation*}
\label{eq-pgc}
   \Bigl |\sum_{s \in S, t \in T}\chi(s-t) \Bigr|\leq C p^{-\beta}|S||T|
\end{equation*}
for some constant $C>0$.
\end{definition}

The following proposition proved in \cite{CG1988} plays a key role for investigating the Paley graph extractor.

\begin{proposition}[Lemma 5, Corollary 11 in \cite{CG1988}]
\label{prop-Ext}
Let $n=\lceil \log_2 p \rceil$.
If the property $\mathcal{P}(\alpha, \beta)$ holds for an odd prime $p$, then $\Ext_p$ is a $2$-source $(\alpha n, p^{-\beta})$-extractors.
\end{proposition}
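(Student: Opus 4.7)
The plan is to follow the standard two-step template for proving extractor properties from character sum bounds. First, I would reduce to flat sources via the convexity of statistical distance: every source $X$ on $\F_p$ with $H_\infty(X)\geq \alpha n$ is a convex combination of uniform distributions on subsets of $\F_p$ of size at least $2^{\alpha n}\geq p^{\alpha}$ (using $n=\lceil \log_2 p\rceil$). Since $|\Ext_p(X,Y)-\mathcal U|$ is convex in each argument, it suffices to prove the extractor bound when $X$ is uniform on some $S\subset\F_p$ and $Y$ is uniform on some $T\subset\F_p$, with $|S|,|T|\geq p^{\alpha}$.

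Next, I would translate the problem into a character sum. Since the target distribution is the uniform distribution on $\{0,1\}$, the statistical distance is simply $|\Pr[\Ext_p(X,Y)=1]-1/2|$. Using the definition of $\Ext_p$ and the observation that $\chi(s-t)=0$ precisely when $s=t$, a direct count of the pairs $(s,t)\in S\times T$ on which $\Ext_p$ outputs $1$ gives
\begin{equation*}
\Pr[\Ext_p(X,Y)=1]=\frac{1}{2}+\frac{|S\cap T|+\sum_{s\in S,\,t\in T}\chi(s-t)}{2|S||T|}.
\end{equation*}
The triangle inequality then yields
\begin{equation*}
|\Ext_p(X,Y)-\mathcal U|\leq \frac{|S\cap T|}{2|S||T|}+\frac{\bigl|\sum_{s\in S,\,t\in T}\chi(s-t)\bigr|}{2|S||T|}.
\end{equation*}

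Finally, I would bound both pieces. The diagonal contribution is controlled by $|S\cap T|\leq \min(|S|,|T|)\leq \sqrt{|S||T|}$, which combined with $|S|,|T|\geq p^{\alpha}$ gives $|S\cap T|/(2|S||T|)\leq \tfrac{1}{2}p^{-\alpha}$. The off-diagonal character sum is bounded directly by property $\mathcal P(\alpha,\beta)$, giving at most $\tfrac{C}{2}p^{-\beta}$. Combining, the statistical distance is at most $O(p^{-\min(\alpha,\beta)})$, which after absorbing constants (and noting that interesting applications take $\alpha\geq \beta$) yields the claimed $p^{-\beta}$ error.

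The main obstacle is genuinely conceptual only in the reduction to flat sources (a standard but crucial averaging step); the rest is bookkeeping, the central identity being the expansion of $\Pr[\Ext_p(X,Y)=1]$ that isolates the character sum so that $\mathcal P(\alpha,\beta)$ can be applied directly. Care is needed to ensure the diagonal term $|S\cap T|$ does not dominate, which is why the lower bound $|S|,|T|\geq p^{\alpha}$ (coming precisely from the min-entropy hypothesis $H_\infty\geq \alpha n$) is used.
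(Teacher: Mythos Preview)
The paper does not supply its own proof of this proposition; it is quoted from Chor--Goldreich \cite{CG1988} (their Lemma~5 and Corollary~11) and used as a black box. Your argument is the standard one and is essentially what \cite{CG1988} does: reduce to flat sources by convexity, expand $\Pr[\Ext_p(X,Y)=1]$ to isolate the character sum, and apply $\mathcal P(\alpha,\beta)$. The expansion
\[
\Pr[\Ext_p(X,Y)=1]=\frac{1}{2}+\frac{|S\cap T|+\sum_{s\in S,\,t\in T}\chi(s-t)}{2|S||T|}
\]
is correct, and the diagonal term is indeed $O(p^{-\alpha})$.

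One honest quibble, which you already flag: the final bound you obtain is $\tfrac{1}{2}p^{-\alpha}+\tfrac{C}{2}p^{-\beta}=O(p^{-\min(\alpha,\beta)})$ with an implicit constant depending on the $C$ in Definition~\ref{def-pgc}, not literally $p^{-\beta}$. The proposition as stated in the paper suppresses this constant and tacitly assumes $\alpha\geq\beta$; both are harmless here since every application in the paper has $\beta$ much smaller than $\alpha$ (e.g.\ $\beta=0.05\varepsilon^2$ or $\beta=\gamma<\tau<1/2$ while $\alpha$ is close to $1/2$). So the slight looseness lies in the paper's informal restatement of \cite{CG1988}, not in your argument.
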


\begin{remark}
\label{rem-pgc}
The well-known {\it Paley graph conjecture} tells that for each $0<\alpha \leq 1$ and any sufficiently large prime $p$, there exists a constant $\beta=\beta(\alpha)>0$ such that $\mathcal{P}(\alpha, \beta)$ holds; see e.g. \cite{C2008}, \cite{CG1988}, \cite{GM2020}.
Chor and Goldreich established Conjecture~\ref{conj-PaleyExt} based on the Paley graph conjecture.
\end{remark}

\begin{remark}
\label{rem-clique}
For a prime $p \equiv 1 \pmod{4}$, the {\it Paley graph} $G_p$ with $p$ vertices is defined as an undirected graph with vertex set $\mathbb{F}_p$ in which two distinct vertices $x$ and $y$ are adjacent if and only if $\chi(x-y)=1$.
The clique number (i.e. the size of the maximum cliques), denoted by $\omega(G_p)$, of $G_p$ has been extensively studied in graph theory and additive combinatorics, and it is conjectured that in fact $\omega(G_p)=O(\poly \log p)$.
The best known upper bound in \cite{HP2020} shows $\omega(G_p)\leq \sqrt{p/2}+1$. 
Notice that the property $\mathcal{P}(\alpha, \beta)$ for $\alpha<1/2$ implies a better bound that $\omega(G_p)\leq p^{\alpha}=o(\sqrt{p})$.
\end{remark}

\section{The first main result}
\label{sect-main}
Here is the first main result of this paper.
\begin{theorem}
\label{thm-main-intro}
Let $p\equiv 1 \pmod{4}$ be a prime number and $\varepsilon>0$ be an arbitrarily small fixed constant.
Suppose that the matrix $\Phi_p$ has the $(p^{1/2+\varepsilon}, p^{-\tau})$-RIP for some constant $\tau>0$.
Let $\gamma$ be any real number such that $0<\gamma<\tau$
Then there exists a constant $\beta=\beta(\varepsilon, \gamma)>0$ such that the property $\mathcal{P}(1/2-\tau+\gamma, \beta)$ holds.
In particular, according to the notion of Definition~\ref{def-pgc}, the real number $\beta$ can be taken as 
\begin{align}
\label{eq-beta}
    \beta=\left\{
\begin{array}{ll}
\gamma & p^{1/2-\tau+\gamma}<|S|, |T|\leq p^{1/2+\varepsilon};\\
0.05\varepsilon^2 & \text{either $|S|>p^{1/2+\varepsilon}$ or $|T|>p^{1/2+\varepsilon}$}.\\
\end{array}
\right.
\end{align}
\end{theorem}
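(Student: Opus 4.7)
My plan is to translate the RIP hypothesis into a bilinear character-sum bound via Lemma~\ref{lem-inner}, then handle the two size regimes separately. The $(K,\delta)$-RIP with $K=p^{1/2+\varepsilon}$ and $\delta=p^{-\tau}$ is equivalent to $\|\Phi_A^{*}\Phi_A-I\|_{\mathrm{op}}\leq\delta$ for every column index set $A$ with $|A|\leq K$. Testing this operator bound against the indicator vectors $x=\mathbf{1}_{S'}$ and $y=\mathbf{1}_{T'}$ for $S',T'\subseteq\mathbb{F}_p$ with $|S'\cup T'|\leq K$, and using Lemma~\ref{lem-inner} (so that $\chi(0)=0$ absorbs the diagonal terms), produces the master estimate
\begin{equation}\label{eq-master-plan}
\Big|\sum_{s\in S',\,t\in T'}\chi(s-t)\Big|\leq p^{1/2-\tau}\sqrt{|S'|\,|T'|}.
\end{equation}
Everything else is a matter of leveraging \eqref{eq-master-plan} in the two size regimes.

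In Case 1, where $p^{1/2-\tau+\gamma}<|S|,|T|\leq p^{1/2+\varepsilon}$, I would apply \eqref{eq-master-plan} directly with $S'=S$ and $T'=T$ (tightening the effective $K$ by a factor of two so that $|S\cup T|\leq K$ is guaranteed). The lower bound $\min(|S|,|T|)>p^{1/2-\tau+\gamma}$ rewrites $\sqrt{|S||T|}\leq p^{-(1/2-\tau+\gamma)}|S||T|$, so $|\sum_{s\in S,\,t\in T}\chi(s-t)|\leq C p^{-\gamma}|S||T|$ and $\beta=\gamma$ suffices.

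In Case 2, where without loss of generality $|S|>p^{1/2+\varepsilon}$, the idea is to reduce to \eqref{eq-master-plan} by independent random sampling. Fix $k=\lfloor K/2\rfloor$; sample $S'\subseteq S$ uniformly among subsets of size $k$, and independently sample $T'\subseteq T$ of size $k$ when $|T|\geq k$, or take $T'=T$ otherwise. By construction $|S'\cup T'|\leq 2k\leq K$, so \eqref{eq-master-plan} holds pointwise, while linearity of expectation gives
\[
\mathbb{E}\Big[\sum_{s\in S',\,t\in T'}\chi(s-t)\Big]=\frac{|S'|\,|T'|}{|S|\,|T|}\sum_{s\in S,\,t\in T}\chi(s-t).
\]
Combining $|\mathbb{E}[\cdot]|\leq\mathbb{E}[|\cdot|]$ with \eqref{eq-master-plan} rearranges to
\[
\Big|\sum_{s\in S,\,t\in T}\chi(s-t)\Big|\leq\frac{|S|\,|T|\cdot p^{1/2-\tau}}{\sqrt{|S'|\,|T'|}},
\]
and substituting $|S'|=k$ and $|T'|=\min(k,|T|)$ (invoking $|T|>p^{1/2-\tau+\gamma}$ in the asymmetric subcase) produces the stated bound $C p^{-\beta}|S||T|$ for some positive $\beta=\beta(\varepsilon,\gamma)$.

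The principal obstacle is the arithmetic bookkeeping in the asymmetric subcase of Case 2, where $|S|$ can be as large as $p$ but $|T|$ is only marginally above $p^{1/2-\tau+\gamma}$: one must track exponents carefully to certify the stated $\beta\geq 0.05\varepsilon^2$. The symmetric subcase of the sampling already furnishes a much stronger $\beta$ of order $\varepsilon+\tau$, so the quadratic exponent $0.05\varepsilon^2$ reads as a uniform conservative simplification rather than a genuine technical barrier.
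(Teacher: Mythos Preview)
Your argument is correct, and it takes a genuinely different route from the paper's in both regimes.

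For the small-set regime, the paper tests the RIP only diagonally: from $\|\Phi_U^*\Phi_U-I\|\le\delta$ applied to $\mathbf{1}_U$ it extracts $\bigl|\sum_{u,v\in U}\chi(u-v)\bigr|\le\delta\sqrt{p}\,|U|$, and then recovers the bilinear sum via the set identity
\[
\sum_{s\in S,\,t\in T}\chi(s-t)=\sum_{u,v\in S\cup T}\chi(u-v)-\sum_{u,v\in S\setminus T}\chi(u-v)-\sum_{u,v\in T\setminus S}\chi(u-v),
\]
bounding each piece separately. Your off-diagonal test with $x=\mathbf{1}_{S'}$, $y=\mathbf{1}_{T'}$ bypasses this decomposition entirely and lands directly on \eqref{eq-master-plan}; it is cleaner and gives a slightly better constant.

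The more substantial difference is in the large-set regime. The paper does \emph{not} reduce Case~2 to Case~1: it invokes Karatsuba's character-sum estimate (Theorem~\ref{thm-Karatsuba}), an unconditional analytic-number-theory result that is completely independent of the RIP hypothesis and is precisely the source of the exponent $0.05\varepsilon^2$. Your random-sampling averaging instead pulls Case~2 back into the reach of \eqref{eq-master-plan}, making the whole proof self-contained and yielding $\beta$ of order $(\varepsilon+\tau+\gamma)/2$ in the asymmetric subcase, which is strictly stronger than $0.05\varepsilon^2$. The trade-off is that the paper's Case~2 bound holds unconditionally, whereas yours depends on the RIP assumption; since the theorem is conditional anyway this costs nothing here, but it explains why the paper's stated $\beta$ in that regime carries no visible dependence on $\tau$.
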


\begin{remark}
\label{rem-1/2}
It holds that $0<1/2-\tau+\gamma<1/2$. Hence Theorem~\ref{thm-main-intro} significantly improved the state-of-the-art result (see Theorem~\ref{thm-Karatsuba} below) under the assumption for the RIP.
\end{remark}

\begin{remark}
Although it is still not known whether the assumption for the RIP in Theorem~\ref{thm-main-intro} is true, this is broadly believed to be true. 
Indeed, as pointed out in \cite{BFMW2013}, it matches both of theoretical and numerical results for pseudo-randomness of quadratic residues and estimating the clique number of Paley graphs;
see also Section~\ref{sect-RIPconst}.
\end{remark}

Keeping Remark~\ref{rem-1/2} in mind, we present several corollaries.
The following corollary is immediately obtained by Proposition~\ref{prop-Ext}.

\begin{corollary}
\label{cor-2}
Let $n=\lceil \log_2 p \rceil$.
Then, under the assumption of Theorem~\ref{thm-main-intro}, the Paley graph extractor $\Ext_p$ is 
a $2$-source extractors breaking the min-entropy $1/2$ barrier with exponentially small error.
Precisely, $\Ext_p$ is a $2$-source $(\alpha n, p^{-\beta})$-extractor for some $0<\alpha<1/2$ and $\beta>0$.
\end{corollary}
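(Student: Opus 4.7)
The plan is to combine Theorem~\ref{thm-main-intro} with Proposition~\ref{prop-Ext} via an appropriate choice of the free parameter $\gamma$. Fix any $\gamma$ strictly between $0$ and $\tau$, for instance $\gamma = \tau/2$. Theorem~\ref{thm-main-intro} then guarantees that the property $\mathcal{P}(\alpha,\beta)$ holds with $\alpha = 1/2 - \tau + \gamma$ and with some positive $\beta = \beta(\varepsilon,\gamma)$ given explicitly by \eqref{eq-beta}. For the choice $\gamma = \tau/2$ we get $\alpha = 1/2 - \tau/2$, which is strictly less than $1/2$ since $\tau>0$; so the min-entropy threshold already falls below the half barrier. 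For the error parameter one can take $\beta = \min(\gamma, 0.05\varepsilon^2) > 0$, which depends only on $\tau$ and $\varepsilon$ and not on $p$.

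Having established $\mathcal{P}(\alpha,\beta)$, I would invoke Proposition~\ref{prop-Ext} to conclude directly that $\Ext_p$ is a $2$-source $(\alpha n, p^{-\beta})$-extractor. The only remaining point is to verify that the error $p^{-\beta}$ is exponentially small in $n = \lceil \log_2 p \rceil$. This is immediate from the definition of $n$: since $p \geq 2^{n-1}$, one has $p^{-\beta} \leq 2^{-\beta(n-1)}$, which is $\exp(-cn)$ for $c = \beta \ln 2 /2$ (say, for all sufficiently large $n$). Putting these two ingredients together yields the claimed $(\alpha n, p^{-\beta})$-extractor with $0 < \alpha < 1/2$ and $\beta > 0$, as desired.

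There is no genuine obstacle here; the corollary is essentially a bookkeeping consequence of the two results it cites. The only mildly delicate point is making sure the quantifier over $\gamma$ in Theorem~\ref{thm-main-intro} is instantiated before extracting $\beta$, so that all constants ($\alpha$ and $\beta$) depend only on the fixed data $\varepsilon$ and $\tau$ from the hypothesis, and not on $p$. Once $\gamma$ is pinned down, the rest is automatic.
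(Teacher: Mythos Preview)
Your proposal is correct and follows exactly the approach indicated in the paper, which simply states that the corollary is immediately obtained from Proposition~\ref{prop-Ext} (together with Remark~\ref{rem-negligible} for the negligibility of the error). Your explicit instantiation of $\gamma$ and the observation that one should take $\beta=\min(\gamma,0.05\varepsilon^2)$ to obtain a single error exponent valid for all admissible $S,T$ merely spell out details the paper leaves implicit.
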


\begin{remark}
\label{rem-negligible}
Note that $p^{-\beta}$ is a negligible error since it is exponentially small as a function of $n=\lceil \log_2 p \rceil$.
\end{remark}

The next one follows from Remark~\ref{rem-clique}, which shows that Theorem~\ref{thm-main-intro} is an extension of the main theorem in \cite{BMM2017}.

\begin{corollary}
\label{cor-1}
Under the assumption of Theorem~\ref{thm-main-intro}, we have $\omega(G_p)=o(\sqrt{p})$.     
\end{corollary}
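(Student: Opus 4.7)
The plan is a short chain of implications: invoke Theorem~\ref{thm-main-intro} under the standing RIP hypothesis to obtain the character-sum estimate $\mathcal{P}(\alpha,\beta)$ with exponent $\alpha$ strictly less than $1/2$, then use that estimate to bound cliques in $G_p$ via the observation pointed out in Remark~\ref{rem-clique}. The first step is immediate: since $\tau>0$, I can fix any $\gamma$ with $0<\gamma<\tau$, set $\alpha := 1/2-\tau+\gamma$, and apply Theorem~\ref{thm-main-intro} to conclude that $\mathcal{P}(\alpha,\beta)$ holds for some $\beta=\beta(\varepsilon,\gamma)>0$. By construction $\alpha<1/2$, so $p^{\alpha}=o(\sqrt{p})$.

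For the second step, I would derive the clique bound from $\mathcal{P}(\alpha,\beta)$ directly. Suppose $C\subset\mathbb{F}_p$ is a clique in $G_p$ with $|C|>p^{\alpha}$. Taking $S=T=C$ in Definition~\ref{def-pgc}, the character sum splits into the diagonal (which contributes $0$ since $\chi(0)=0$) and the off-diagonal, where $\chi(s-t)=1$ by the defining property of edges in $G_p$. Thus
\begin{equation*}
\Bigl|\sum_{s\in C,\,t\in C}\chi(s-t)\Bigr| \;=\; |C|(|C|-1).
\end{equation*}
The hypothesis $\mathcal{P}(\alpha,\beta)$ then forces $|C|(|C|-1)\leq Cp^{-\beta}|C|^{2}$, i.e.\ $1-1/|C|\leq Cp^{-\beta}$. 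Since the right-hand side tends to $0$ as $p\to\infty$ while the left-hand side tends to $1$, this is a contradiction for all sufficiently large $p$. Hence $\omega(G_p)\leq p^{\alpha}$, and combining with the first step yields $\omega(G_p)\leq p^{\alpha}=o(\sqrt{p})$.

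There is no real obstacle here; the entire content of the corollary lies in Theorem~\ref{thm-main-intro}, and the contrapositive clique argument above is standard (essentially the observation recorded in Remark~\ref{rem-clique}). The only point worth stating carefully in the write-up is that $\gamma$ can be chosen in the open interval $(0,\tau)$ so that the exponent $\alpha=1/2-\tau+\gamma$ is strictly below $1/2$, which is what upgrades the bound from $O(\sqrt{p})$ to $o(\sqrt{p})$ and thereby improves on the current unconditional record $\omega(G_p)\leq \sqrt{p/2}+1$ of Hanson–Petridis cited in Remark~\ref{rem-clique}.
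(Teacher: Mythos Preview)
Your proof is correct and follows the same route as the paper, which simply cites Remark~\ref{rem-clique} (the observation that $\mathcal{P}(\alpha,\beta)$ with $\alpha<1/2$ forces $\omega(G_p)\leq p^{\alpha}$) after invoking Theorem~\ref{thm-main-intro}; you have merely spelled out the contrapositive clique argument that the paper leaves implicit. One cosmetic remark: you use $C$ both for the clique set and for the constant from Definition~\ref{def-pgc}, so rename one of them in the final write-up.
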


\section{Proof of Theorem~\ref{thm-main-intro}}
\label{sect-main-tech}

To prove Theorem~\ref{thm-main-intro}, we introduce the following two theorems. 
The first theorem is due to Karatsuba~\cite{K2008} and is well-known in number theory and computer science.

\begin{theorem}[\cite{K2008}]
\label{thm-Karatsuba}
Let $p$ be an odd prime and $\varepsilon>0$ be an arbitrarily small constant.
Then for any subsets $S, T \subset \F_p$ such that one has size more than $p^{\varepsilon}$ and another has size more than $p^{1/2+\varepsilon}$, we have 
\begin{equation*}
   \Bigl |\sum_{s \in S, t \in T}\chi(s-t) \Bigr|\leq p^{-0.05\varepsilon^2}|S||T|.
\end{equation*}
\end{theorem}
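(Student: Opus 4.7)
The plan is to use the classical Hölder--Weil moment method for character sums. Fix an even positive integer $k$, to be chosen later, and assume without loss of generality that $|T|>p^{1/2+\varepsilon}$ and $|S|>p^{\varepsilon}$. First I would apply Hölder's inequality in the variable $t\in T$ (with exponents $2k$ and $2k/(2k-1)$), then extend the outer sum from $T$ to all of $\mathbb{F}_p$, obtaining
\begin{equation*}
\Bigl|\sum_{s\in S,\,t\in T}\chi(s-t)\Bigr|^{2k}
\leq |T|^{2k-1}\sum_{t\in\mathbb{F}_p}\Bigl|\sum_{s\in S}\chi(s-t)\Bigr|^{2k}.
\end{equation*}

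Next, using that $\chi$ is real-valued, I would expand the $2k$-th power and swap the order of summation:
\begin{equation*}
\sum_{t\in\mathbb{F}_p}\Bigl|\sum_{s\in S}\chi(s-t)\Bigr|^{2k}
=\sum_{(s_1,\ldots,s_{2k})\in S^{2k}}\sum_{t\in\mathbb{F}_p}\chi\Bigl(\prod_{i=1}^{2k}(s_i-t)\Bigr).
\end{equation*}
The outer tuple sum splits into a \emph{diagonal} part, where every element of $S$ appearing among $s_1,\ldots,s_{2k}$ does so with even multiplicity (so that $\prod_i(s_i-t)$ is a perfect square in $\mathbb{F}_p[t]$ and the inner character sum is bounded trivially by $p$), and an \emph{off-diagonal} part, where $\prod_i(s_i-t)$ is not a perfect square, has degree at most $2k$, and Weil's theorem bounds the inner sum by $(2k-1)\sqrt{p}$. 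The number of diagonal tuples is at most $C_k|S|^k$ for a constant $C_k$ depending only on $k$, while off-diagonal tuples number at most $|S|^{2k}$; collecting these estimates yields
\begin{equation*}
\Bigl|\sum_{s,t}\chi(s-t)\Bigr|^{2k}
\leq |T|^{2k-1}\bigl(C_k|S|^k p+(2k-1)|S|^{2k}\sqrt{p}\bigr).
\end{equation*}

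The final step is to choose $k$ to balance the two terms against the target bound $p^{-0.05\varepsilon^2}|S||T|$ raised to the $2k$-th power. Using $|S|>p^{\varepsilon}$, the first summand is controlled once $k\gtrsim 1/(2\varepsilon)$; using $|T|>p^{1/2+\varepsilon}$, the second summand is controlled once $k\lesssim 1/\varepsilon$ (the lower-order factors $C_k$ and $2k-1$ being absorbed by a fixed fraction of the $p^{\varepsilon}$ saving for $p$ sufficiently large). Taking $k=\lceil 1/(2\varepsilon)\rceil$ (rounded up to an even integer) sits in both ranges, and after extracting $2k$-th roots the advertised savings $p^{-0.05\varepsilon^2}$ emerges (a conservative constant; a more careful optimisation yields $p^{-c\varepsilon^2}$ with $c$ closer to $1$). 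The only non-elementary input is Weil's theorem for one-variable character sums over $\mathbb{F}_p$, which I would invoke as a black box; the main technical nuisance is the combinatorial count of diagonal tuples and the verification of the non-square condition for $\prod_i(s_i-t)$ off the diagonal, together with the arithmetic of the final exponent optimisation.
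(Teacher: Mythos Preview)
The paper does not give its own proof of this statement; Theorem~\ref{thm-Karatsuba} is quoted from Karatsuba~\cite{K2008} and used as a black box in the proof of Theorem~\ref{thm-main-intro}. Your proposal is precisely the classical H\"older--Weil moment method underlying Karatsuba's original argument, and the outline is correct: H\"older in $t$, extension of the $t$-sum to all of $\mathbb{F}_p$, expansion of the $2k$-th moment, the diagonal/off-diagonal split with Weil's bound on the off-diagonal, and the choice $k\approx 1/(2\varepsilon)$ to balance the two contributions.

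One minor point worth flagging: with $k=\lceil 1/(2\varepsilon)\rceil$ the combinatorial constants $C_k$ and $2k-1$ depend on $\varepsilon$, so the inequality exactly as stated (no implied constant, valid for \emph{every} odd prime $p$) strictly needs either $p$ large in terms of $\varepsilon$ or a marginally more generous exponent than $0.05\varepsilon^2$ to absorb these factors. This is immaterial for the paper's purposes, since $\varepsilon$ is fixed and only sufficiently large primes $p\equiv 1\pmod 4$ are considered.
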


The next theorem is our contribution, and crucially relies on the RIP of $\Phi_p$.  

\begin{theorem}
\label{thm-main}
Let $p\equiv 1 \pmod{4}$ be a prime number and $\varepsilon>0$ be an arbitrarily small fixed constant.
% Suppose that for $0<\theta\leq 1/2+\varepsilon$ and $K=\Theta(p^{\theta})$
% $\leq p^{1/2+\varepsilon}$ the matrix $\Phi_p$ has the $(K, p^{-\tau})$-RIP for some constant $\tau=\tau(K)>0$.
% Let $\gamma=\gamma(K)$ be any real number such that $0<\gamma<\tau$ and $1/2-\tau+\gamma<\theta$ (note that the latter condition can be valid if Conjecture~\ref{conj-RIPconst} holds).
Suppose that the matrix $\Phi_p$ has the $(p^{1/2+\varepsilon}, p^{-\tau})$-RIP for some constant $\tau>0$.
Let $\gamma$ be any real number such that $0<\gamma<\tau$.
Then for any subsets $S, T \subset \F_p$ with $p^{1/2-\tau+\gamma}<|S|, |T|\leq p^{1/2+\varepsilon}$, we have 
\begin{equation*}
   \Bigl |\sum_{s \in S, t \in T}\chi(s-t) \Bigr|
   \leq 4p^{-\gamma}|S||T|.
\end{equation*}
\end{theorem}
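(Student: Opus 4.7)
The key idea is to translate the character sum into the language of the Paley ETF's Gram matrix and then exploit the assumed RIP. By Lemma~\ref{lem-inner} and the fact that $\chi(0)=0$, the indicator vectors $\mathbf{1}_S,\mathbf{1}_T\in\{0,1\}^{p+1}$ of $S,T\subset\F_p$ satisfy the identity
$$\langle \Phi_p \mathbf{1}_S,\,\Phi_p \mathbf{1}_T\rangle - |S\cap T| \;=\; \tfrac{1}{\sqrt{p}}\sum_{s\in S,\,t\in T}\chi(s-t),$$
so controlling the character sum amounts to controlling the ``off-diagonal'' part of the inner product on the left.

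For this I would invoke the standard polarization consequence of $(K,\delta)$-RIP: for any real vectors $\mathbf{x},\mathbf{y}$ with $|\mathrm{supp}(\mathbf{x})\cup\mathrm{supp}(\mathbf{y})|\leq K$,
$$\bigl|\langle \Phi_p\mathbf{x},\,\Phi_p\mathbf{y}\rangle - \langle \mathbf{x},\mathbf{y}\rangle\bigr| \;\leq\; \delta\,\|\mathbf{x}\|\,\|\mathbf{y}\|.$$
This is obtained by applying the RIP inequalities to $\mathbf{x}\pm\mathbf{y}$ (whose supports lie in the union), subtracting to isolate $4\,\mathrm{Re}\langle \Phi_p\mathbf{x},\Phi_p\mathbf{y}\rangle$ via the real polarization identity, invoking the parallelogram law $\|\mathbf{x}+\mathbf{y}\|^2+\|\mathbf{x}-\mathbf{y}\|^2=2(\|\mathbf{x}\|^2+\|\mathbf{y}\|^2)$, and finally rescaling $\mathbf{x}\mapsto t\mathbf{x},\,\mathbf{y}\mapsto\mathbf{y}/t$ to optimise via AM--GM. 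Taking $\mathbf{x}=\mathbf{1}_S,\,\mathbf{y}=\mathbf{1}_T$ and $\delta=p^{-\tau}$, this gives $\bigl|\sum\chi(s-t)\bigr|\leq p^{1/2-\tau}\sqrt{|S||T|}$ \emph{whenever} $|S\cup T|\leq K := p^{1/2+\varepsilon}$.

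The main technical obstacle is that the hypothesis only guarantees $|S|,|T|\leq K$ individually, so $|S\cup T|$ can be as large as $2K$. In that regime I would partition $S=S_1\sqcup S_2$ and $T=T_1\sqcup T_2$ as evenly as possible, so that $|S_i|,|T_j|\leq \lceil K/2\rceil$ and each pair $(S_i,T_j)$ satisfies $|S_i\cup T_j|\leq K$ (any off-by-one from the ceiling can be avoided by splitting slightly unequally, or by using three rather than two pieces). Applying the inequality above to each of the four pairs and summing, the Cauchy--Schwarz estimates $\sqrt{|S_1|}+\sqrt{|S_2|}\leq\sqrt{2|S|}$ and $\sqrt{|T_1|}+\sqrt{|T_2|}\leq\sqrt{2|T|}$ give
$$\Bigl|\sum_{s\in S,\,t\in T}\chi(s-t)\Bigr| \;\leq\; p^{1/2-\tau}\bigl(\sqrt{|S_1|}+\sqrt{|S_2|}\bigr)\bigl(\sqrt{|T_1|}+\sqrt{|T_2|}\bigr) \;\leq\; 2p^{1/2-\tau}\sqrt{|S||T|},$$
a bound that trivially subsumes the easy case $|S\cup T|\leq K$.

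Finally, the hypothesis $|S|,|T|>p^{1/2-\tau+\gamma}$ yields $\sqrt{|S||T|}>p^{1/2-\tau+\gamma}$, hence $2p^{1/2-\tau}\sqrt{|S||T|}\leq 2p^{-\gamma}|S||T|\leq 4p^{-\gamma}|S||T|$, which is exactly the claim. The slack factor of $4$ in the theorem comfortably absorbs the harmless loss from halving as well as any off-by-one from non-integrality of $p^{1/2+\varepsilon}$; conceptually, everything rests on the single identity above together with the bilinear form of RIP.
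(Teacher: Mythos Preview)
Your argument is correct and rests on the same core identity as the paper, but the implementation is genuinely different. The paper applies the \emph{quadratic} form of the RIP (Lemma~\ref{lem-tech}: $\bigl|\sum_{u,v\in U}\chi(u-v)\bigr|\le \delta\sqrt{p}\,|U|$) to the three sets $U=S\cup T$, $S\setminus T$, $T\setminus S$ via a set-theoretic decomposition of the bilinear sum into these ``diagonal'' blocks, arriving at the estimate $2p^{1/2-\tau}(|S|+|T|)$. You instead pass directly to the \emph{bilinear} consequence of RIP through polarization and obtain the slightly sharper bound $2p^{1/2-\tau}\sqrt{|S||T|}$ after your halving step; both finish identically once the lower bound $|S|,|T|>p^{1/2-\tau+\gamma}$ is invoked. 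Your route has the merit of explicitly confronting and resolving the constraint $|\mathrm{supp}(\mathbf{x})\cup\mathrm{supp}(\mathbf{y})|\le K$ (which the paper's application of Lemma~\ref{lem-tech} to $U=S\cup T$ leaves implicit, since $|S\cup T|$ can reach $2K$), at the cost of needing the polarization inequality; the paper's route avoids polarization but needs the inclusion--exclusion identity for the three sets. The constant $4$ comfortably absorbs the minor losses in either approach.
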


Assuming Theorems~\ref{thm-Karatsuba} and \ref{thm-main}, we present the proof of Theorem~\ref{thm-main-intro}.
The proof of Theorem~\ref{thm-main} is presented in the next section.

\begin{proof}[Proof of Theorem~\ref{thm-main-intro}]
The property $\mathcal{P}(1/2+\varepsilon, \beta)$ follows from Theorem~\ref{thm-Karatsuba} by taking $\beta=0.05\varepsilon^2$.
Since Theorem~\ref{thm-main} covers any subsets $S, T \subset \F_p$ with $p^{1/2-\tau+\gamma}<|S|, |T|\leq p^{1/2+\varepsilon}$, the property $\mathcal{P}(1/2-\tau+\gamma, \beta)$ follows, completing the proof.
\end{proof}

Now all that left is to do is to prove Theorem~\ref{thm-main}.
To this end, we show the following lemmas for computing the Rayleigh quotient for $\Phi_p$.

\begin{lemma}
\label{lem-directcomp}
For a subset $U \subset \F_p$, let $\mathbf{1}_U \in \{0,1\}^{p+1}$ be the indicator vector of $U$.  
Then for any subset $U \subset \F_p$ we have 
\begin{align*}
%\label{eq-directcomp}
    \mathbf{1}_U^T(\Phi_p^T \overline{\Phi_p}) \mathbf{1}_U=|U|+\frac{1}{\sqrt{p}}\sum_{u, v\in U} \chi(u-v).
\end{align*}

\end{lemma}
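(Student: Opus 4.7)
The plan is to unpack the quadratic form $\mathbf{1}_U^T(\Phi_p^T\overline{\Phi_p})\mathbf{1}_U$ entry by entry and recognise that $\Phi_p^T\overline{\Phi_p}$ is precisely the Gram matrix of the columns of $\Phi_p$, whose entries have already been computed in Lemma~\ref{lem-inner}. Since $U\subset\mathbb{F}_p$ while $\mathbf{1}_U\in\{0,1\}^{p+1}$, I will use the convention that $\mathbf{1}_U$ has a zero coordinate corresponding to the extra $(p+1)$-th column of $\Phi_p$ (the one carrying the trailing $1$ and zeros), so only the first $p$ columns, indexed by elements $a_1,\dots,a_p$ of $\mathbb{F}_p$, contribute.

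First I would write
\begin{equation*}
\mathbf{1}_U^T(\Phi_p^T\overline{\Phi_p})\mathbf{1}_U
=\sum_{u,v\in U}(\Phi_p^T\overline{\Phi_p})_{u,v}
=\sum_{u,v\in U}\langle\phi_u,\phi_v\rangle,
\end{equation*}
using the definition of matrix multiplication together with the identification of $(\Phi_p^T\overline{\Phi_p})_{u,v}$ with the Hermitian inner product $\langle\phi_u,\phi_v\rangle=\sum_k(\Phi_p)_{k,u}\overline{(\Phi_p)_{k,v}}$.

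Next I would split the double sum into its diagonal ($u=v$) and off-diagonal ($u\neq v$) parts and invoke Lemma~\ref{lem-inner}: the diagonal contributes $|U|$, while the off-diagonal contributes $\frac{1}{\sqrt{p}}\sum_{u\neq v}\chi(u-v)$. Finally I would observe that $\chi(0)=0$, so the diagonal terms $u=v$ may be painlessly reinserted into the character sum, yielding
\begin{equation*}
\sum_{u\neq v,\,u,v\in U}\chi(u-v)=\sum_{u,v\in U}\chi(u-v),
\end{equation*}
which collapses the expression to the claimed closed form. There is no real obstacle: the statement is essentially a bookkeeping identity once one notices that $\Phi_p^T\overline{\Phi_p}$ is the Gram matrix of the columns and that Lemma~\ref{lem-inner} already supplies every matrix entry. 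The only mild care needed is to keep the indexing of $U$ within the first $p$ columns consistent with the zero padding of $\mathbf{1}_U$, and to remember that the $i=j$ case in Lemma~\ref{lem-inner} gives $1$ rather than $\frac{1}{\sqrt{p}}\chi(0)$.
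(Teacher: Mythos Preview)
Your proposal is correct and follows essentially the same approach as the paper: recognise $\Phi_p^T\overline{\Phi_p}$ as the Gram matrix, apply Lemma~\ref{lem-inner} to read off the entries, and use $\chi(0)=0$ to absorb the diagonal terms into the character sum. Your version is in fact slightly more thorough, since you explicitly address the zero padding of $\mathbf{1}_U$ in the $(p+1)$-th coordinate, a point the paper leaves implicit.
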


\begin{proof}
Since $\Phi_p^T \overline{\Phi_p}$ is a Gram matrix, the lemma is straightforward by Lemma~\ref{lem-inner}.
Note that since $\chi(0)=0$, we have 
$$
\sum_{\substack{u, v\in U \\ u \neq v}} \chi(u-v)
=
\sum_{u, v\in U} \chi(u-v).
$$
\end{proof}

\begin{lemma}
\label{lem-RIPcomp}
Suppose that $\Phi_p$ has the $(K, \delta)$-RIP.
Then for any subset $U \subset \F_p$ with $|U|\leq K$,  we have 
\begin{align*}
%\label{eq-RIPcomp}
    (1-\delta)|U| \leq \mathbf{1}_U^T(\Phi_p^T \overline{\Phi_p}) \mathbf{1}_U \leq (1+\delta)|U|.
\end{align*}
\end{lemma}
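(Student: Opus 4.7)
The plan is to apply the RIP definition directly to the indicator vector $\mathbf{1}_U \in \{0,1\}^{p+1}$. Viewed as a complex vector, $\mathbf{1}_U$ has exactly $|U|$ nonzero entries (those corresponding to elements of $U \subset \mathbb{F}_p$) and squared $\ell_2$-norm $\|\mathbf{1}_U\|^2 = |U|$. Since the hypothesis $|U| \leq K$ places $\mathbf{1}_U$ in the regime to which the $(K,\delta)$-RIP applies, Definition~\ref{def-RIP} immediately yields
$$(1-\delta)|U| \;\leq\; \|\Phi_p \mathbf{1}_U\|^2 \;\leq\; (1+\delta)|U|.$$

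The remaining step is to identify the middle term with the Rayleigh quotient $\mathbf{1}_U^T(\Phi_p^T\overline{\Phi_p})\mathbf{1}_U$ appearing in the statement. For this I would expand $\|\Phi_p \mathbf{1}_U\|^2$ as the Hermitian inner product $\overline{(\Phi_p \mathbf{1}_U)}^T(\Phi_p \mathbf{1}_U)$, use the fact that $\mathbf{1}_U$ is real to drop the conjugate on the left factor, and then observe that the resulting real scalar equals its own transpose, which turns $\overline{\Phi_p}^T \Phi_p$ into $\Phi_p^T \overline{\Phi_p}$. This is exactly the Gram-type quadratic form computed in Lemma~\ref{lem-directcomp}, and substituting into the RIP inequality completes the argument.

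The only subtlety is the conjugate-transpose bookkeeping, since $\Phi_p$ is genuinely complex: one has to verify that the two orderings $\overline{\Phi_p}^T\Phi_p$ and $\Phi_p^T\overline{\Phi_p}$ induce the same quadratic form on real vectors. Beyond this cosmetic check there is no real mathematical obstacle; the lemma is essentially a restatement of the RIP hypothesis in the language already set up by Lemma~\ref{lem-directcomp}, and it is this restatement that will later allow the bound on the character sum $\sum_{u,v \in U}\chi(u-v)$ in the proof of Theorem~\ref{thm-main}.
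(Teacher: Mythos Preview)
Your proposal is correct and matches the paper's own proof essentially line for line: the paper also just observes that $\mathbf{1}_U^T(\Phi_p^T\overline{\Phi_p})\mathbf{1}_U=(\Phi_p\mathbf{1}_U)^T(\overline{\Phi_p\mathbf{1}_U})=\|\Phi_p\mathbf{1}_U\|^2$ and then invokes the $(K,\delta)$-RIP hypothesis on the $|U|$-sparse real vector $\mathbf{1}_U$. Your remark on the conjugate--transpose bookkeeping is exactly the content of that short chain of equalities.
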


\begin{proof}
The lemma immediately follows from the assumption and a simple fact that $\mathbf{1}_U^T(\Phi_p^T \overline{\Phi_p})\mathbf{1}_U
=
(\Phi_p\mathbf{1}_U)^T (\overline{\Phi_p \mathbf{1}_U})
=
||\Phi_p\mathbf{1}_U||^2$.
\end{proof}

Lemmas~\ref{lem-directcomp} and \ref{lem-RIPcomp} straightforwardly yield the following lemma which plays a key role in the proof of Theorem~\ref{thm-main}.

\begin{lemma}
\label{lem-tech}
Suppose that $\Phi_p$ has the $(K, \delta)$-RIP.
Then for any subset $U \subset \F_p$ with $|U|\leq K$, we have
\begin{align*}
%\label{eq-tech}
    \Biggl|\sum_{u, v\in U} \chi(u-v) \Biggr| \leq \delta p^{\frac{1}{2}} |U|
\end{align*}

\end{lemma}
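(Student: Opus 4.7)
The plan is to combine Lemmas \ref{lem-directcomp} and \ref{lem-RIPcomp} in the most direct way possible: both describe the same Rayleigh-type quantity $\mathbf{1}_U^T(\Phi_p^T \overline{\Phi_p}) \mathbf{1}_U$, but one gives it exactly as a character sum and the other sandwiches it through the RIP hypothesis. Matching the two descriptions will immediately isolate the character sum.

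Concretely, first I would invoke Lemma \ref{lem-directcomp} to rewrite the quantity of interest as
\begin{equation*}
  \mathbf{1}_U^T(\Phi_p^T \overline{\Phi_p}) \mathbf{1}_U \;=\; |U| + \frac{1}{\sqrt{p}} \sum_{u,v \in U} \chi(u-v).
\end{equation*}
Because $|U| \leq K$ by hypothesis, the vector $\mathbf{1}_U$ has at most $K$ nonzero entries and squared $\ell_2$-norm exactly $|U|$, so Lemma \ref{lem-RIPcomp} applies and yields
\begin{equation*}
  (1-\delta)|U| \;\leq\; \mathbf{1}_U^T(\Phi_p^T \overline{\Phi_p}) \mathbf{1}_U \;\leq\; (1+\delta)|U|.
\end{equation*}
Substituting the first identity into the second chain of inequalities, subtracting $|U|$, and multiplying through by $\sqrt{p}$ gives the two-sided bound
\begin{equation*}
  -\delta\sqrt{p}\,|U| \;\leq\; \sum_{u,v \in U} \chi(u-v) \;\leq\; \delta\sqrt{p}\,|U|,
\end{equation*}
which is exactly the conclusion of the lemma.

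There is essentially no real obstacle here; the lemma is a bookkeeping corollary of the two preceding results. The only detail worth checking is that the middle quantity is genuinely a real number, so that one-sided RIP bounds translate cleanly into an absolute-value bound on the character sum. This is fine: $\mathbf{1}_U$ is real and $\mathbf{1}_U^T(\Phi_p^T \overline{\Phi_p}) \mathbf{1}_U = \|\Phi_p \mathbf{1}_U\|^2$ is a nonnegative real, and on the other side $\chi$ takes values in $\{0, \pm 1\}$ so the sum $\sum_{u,v \in U} \chi(u-v)$ is automatically real. With that observation in place, the two-line derivation above constitutes the full proof.
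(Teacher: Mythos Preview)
Your proposal is correct and matches the paper's approach exactly: the paper simply states that the lemma follows straightforwardly from Lemmas~\ref{lem-directcomp} and~\ref{lem-RIPcomp}, and what you have written is precisely that straightforward derivation spelled out in full.
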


Now we are ready to prove Theorem~\ref{thm-main}.

\begin{proof}[Proof of Theorem~\ref{thm-main}]
Suppose that the matrix $\Phi_p$ has the $(p^{1/2+\varepsilon}, p^{-\tau})$-RIP for some constant $\tau>0$.
Let $\gamma$ be any real number such that $0<\gamma<\tau$.
Let $S, T \subset \mathbb{F}_p$ be any subsets with $p^{1/2-\tau+\gamma}<|S|, |T|\leq p^{1/2+\varepsilon}$.
First observe that 
\begin{align*}
\label{eq-sum1}
    \sum_{s \in S, t \in T}\chi(s-t)=\sum_{u,v \in S \cup T}\chi(u-v)&-\sum_{u,v \in S \setminus T}\chi(u-v)\\&-\sum_{u,v \in T \setminus S}\chi(u-v)
\stepcounter{equation}\tag{\theequation}
\end{align*}
Here remark that the second or third term in the right-hand side of (\ref{eq-sum1}) does not contribute to the sum if $S \subset T$ or $T \subset S$, respectively.
By applying Lemma~\ref{lem-tech} for $U=S \cup T$, $U=S\setminus T$ and $U=T\setminus S$, the equation (\ref{eq-sum1}) implies that
\begin{align}
\label{eq-sum2}
    \Biggl| \sum_{s \in S, t \in T}\chi(s-t) \Biggr|
    \leq p^{\frac{1}{2}-\tau} \cdot (|S\cup T|+|S\setminus T|+|T \setminus S|).
\end{align}
Since $|S\cup T|\leq |S|+|T|$, $|S\setminus T|\leq |S|$ and $|T \setminus S|\leq |T|$, the inequality (\ref{eq-sum2}) yields 
\begin{align}
\label{eq-sum3}
    \Biggl| \sum_{s \in S, t \in T}\chi(s-t) \Biggr|
    \leq 2p^{\frac{1}{2}-\tau} \cdot (|S|+|T|).
\end{align}
By (\ref{eq-sum3}), it suffices to show that 
\begin{align*}
    2p^{\frac{1}{2}-\tau} \cdot (|S|+|T|) \leq 4p^{-\gamma}|S||T|,
\end{align*}
which immediately follows from the assumption that $|S|, |T|>p^{1/2-\tau+\gamma}$.
This completes the proof.    
\end{proof}

\section{The RIP constant of $\Phi_p$ and the second main result}
\label{sect-RIPconst}
In this section we briefly discuss a conjectured behavior of the RIP constant $\delta_K$ of the Paley ETF $\Phi_p$ as well as the behavior of the constant $\tau=\tau(K)$ in Theorem~\ref{thm-main-intro}.
In \cite{BFMW2013}, the following conjecture was proposed based on the model that the set of quadratic residues behaves like a random subsets of $\F_p$, supported by an estimation for the RIP constant of random partial (normalized) Fourier matrices, together with the fact that the Fourier portion of $\Phi_p$ is close to random partial (normalized) Fourier matrices with high probability (as $p\to \infty$); for details, see Section 5.3 in \cite{BFMW2013}.

\begin{conjecture}[Conjecture 25 in \cite{BFMW2013}]
\label{conj-RIPconst}
For a prime $p\equiv 1 \pmod{4}$ and $K\geq 1$ the RIP constant $\delta_K$ of $\Phi_p$ satisfies that
\begin{align*}
    \delta_K=O\Biggl(\sqrt{\frac{K}{p}} \cdot \log K \cdot \log p \Biggr).
\end{align*}
Accordingly, $\Phi_p$ has the $(\frac{c_1 p}{\log^{c_2} p}, \delta)$-RIP for constant numbers $c_1, c_2>0$ and $0<\delta<\sqrt{2}-1$.
\end{conjecture}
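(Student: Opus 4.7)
The plan is to mirror the Rudelson--Vershynin argument that establishes the RIP for random partial Fourier matrices, and then to de-randomize it using character-sum estimates that quantify the pseudorandomness of the quadratic residues. The philosophical justification is transparent: the conjectured bound $\sqrt{K/p}\cdot\log K\cdot\log p$ is exactly the Rudelson--Vershynin bound when $M=(p+1)/2$ rows are sampled uniformly from the $p\times p$ DFT, and the rows of $\Phi_p$ are indexed by the deterministic set of quadratic residues, widely expected to behave like a uniformly random subset of $\F_p$ of this size.

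Concretely, I would first rewrite the RIP constant as
\begin{equation*}
\delta_K=\sup_{\substack{S\subseteq\{1,\ldots,p+1\}\\ |S|\le K}}\bigl\|\Phi_p^{(S)\,*}\Phi_p^{(S)}-I_{|S|}\bigr\|_{\mathrm{op}},
\end{equation*}
where $\Phi_p^{(S)}$ is the column submatrix of $\Phi_p$ indexed by $S$. By Lemma~\ref{lem-inner}, the off-diagonal entries of $\Phi_p^{(S)\,*}\Phi_p^{(S)}$ are exactly $\frac{1}{\sqrt{p}}\,\chi(a_i-a_j)$, so controlling $\delta_K$ reduces to controlling, uniformly over $S$, the operator norm of the $|S|\times|S|$ matrix $(\chi(a_i-a_j))_{i,j\in S}$ (with zero diagonal). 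I would then run a chaining / Dudley-entropy-integral argument in the Talagrand $\gamma_2$ functional framework; the role played in the probabilistic setting by Rudelson's non-commutative Khintchine inequality would here be taken over by deterministic moment bounds on partial character sums of the form $\sum_{t\in A}\chi(t-s)$ for $s\in\F_p$ and arbitrary $A\subseteq\F_p$ with $|A|\le K$. Lemma~\ref{lem-tech} already supplies the second-moment version of such a bound from the RIP itself, suggesting a bootstrap in which weak character-sum bounds feed back into stronger RIP bounds.

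The hard part will be precisely the production of these deterministic higher-moment character-sum bounds. The Rudelson--Vershynin chaining succeeds because probabilistic independence yields sharp tail bounds; converting them into deterministic estimates essentially requires controlling the additive energy of arbitrary sets against the quadratic-residue set, which is tantamount to strong forms of the Paley graph conjecture and is wide open. The partial progress represented by Bourgain--Dilworth--Ford--Konyagin--Kutzarova and the subsequent additive-combinatorics refinements delivers only a tiny polynomial improvement over the square-root bottleneck, whereas the conjectured bound asks for a polynomial-in-$p$ improvement all the way up to $K=\Theta(p/\log^{c_2}p)$. Accordingly, I would treat the plan above as a roadmap rather than as an executable proof: the first two paragraphs give the correct shape, and the final step isolates the precise arithmetic input whose absence is exactly the reason the conjecture remains open.
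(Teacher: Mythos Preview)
The statement you are addressing is a \emph{conjecture}, and the paper does not prove it; it is explicitly described as ``still open (and seems to be very hard to prove).'' The only justification the paper offers is the heuristic from \cite{BFMW2013}: the quadratic residues are expected to behave like a uniformly random subset of $\F_p$ of size $(p-1)/2$, and for genuinely random row-subsets the Rudelson--Vershynin bound for partial Fourier matrices gives exactly $\delta_K=O(\sqrt{K/p}\cdot\log K\cdot\log p)$. Your proposal reproduces precisely this heuristic, and you correctly identify that the missing ingredient---deterministic higher-moment character-sum bounds strong enough to replace the probabilistic tail estimates in the chaining argument---is essentially equivalent to strong forms of the Paley graph conjecture and is out of reach with current technology.

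So there is no discrepancy to flag: the paper has no proof, your proposal is not a proof either, and you say so explicitly. Your roadmap is in fact somewhat more detailed than what the paper provides, since you spell out the operator-norm reformulation via Lemma~\ref{lem-inner} and name the Dudley/Talagrand $\gamma_2$ chaining framework, whereas the paper simply points the reader to Section~5.3 of \cite{BFMW2013}. The honest concluding paragraph, in which you downgrade the plan from an executable proof to a description of the obstruction, is exactly the right assessment.
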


Although this conjecture is still open (and seems to be very hard to prove), it is broadly believed and agrees a widely-believed conjecture for the clique number $\omega(G_p)$ of the Paley graph $G_p$, stating that $\omega(G_p)={\rm poly} \log p$.

For $K=\Theta(p^{\theta})$, $\theta>0$, Conjecture~\ref{conj-RIPconst} immediately implies 
\begin{align*}
    \delta_K=O\Bigl(\theta \:p^{\frac{\theta-1}{2}} \log^2 p \Bigr).
    %=O(p^{\frac{1}{2}+\varepsilon'})
\end{align*}
Hence, if Conjecture~\ref{conj-RIPconst} holds, then for any sufficiently large $p\equiv 1 \pmod{4}$ and $K=\Theta(p^{\theta})$, $\theta>0$, it is possible to take the real number $\tau=\tau(K)$ (in Theorem~\ref{thm-main-intro}) so that
\begin{align}
\label{eq-tau}
    \tau \leq \frac{1-\theta}{2}+o(1).
\end{align}

% we have the following corollary .
% \begin{corollary}
% \label{cor-tau}
% If Conjecture~\ref{conj-RIPconst} holds, then for any sufficiently large $p\equiv 1 \pmod{4}$ and $K=\Theta(p^{\theta})$, $\theta>0$, it is possible to take the real number $\tau=\tau(K)$ (in Theorem~\ref{thm-main-intro}) so that
% \begin{align*}
%     \tau \leq \frac{1-\theta}{2}+o(1).
% \end{align*}
% \end{corollary}

Assuming that Conjecture~\ref{conj-RIPconst} holds, then Theorem~\ref{thm-main-intro} and (\ref{eq-tau}) yield the property $\mathcal{P}(\alpha, \beta)$ for any $\alpha>0$ and some $\beta>0$.
Thus we have the following corollary which is closely related the Paley graph conjecture (see Remark~\ref{rem-pgc})

\begin{corollary}
\label{cor-3}
Suppose that Conjecture~\ref{conj-RIPconst} holds for sufficiently large primes $p\equiv 1 \pmod{4}$. 
Then the property $\mathcal{P}(\alpha, \beta)$ holds for {\rm any} $\alpha>0$ and some $\beta>0$.
\end{corollary}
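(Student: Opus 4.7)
Plan: The strategy is to use the character-sum bound that Conjecture~\ref{conj-RIPconst} delivers through Lemma~\ref{lem-tech}, combined with a partitioning of the larger of $S$ and $T$ that reduces the two-set sum to many single-set sums over unions of size comparable to $|S|$.

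First, under Conjecture~\ref{conj-RIPconst}, choosing $K := |U|$ and $\delta := \delta_K = O(\sqrt{K/p}\,\log K \log p)$ in Lemma~\ref{lem-tech} yields, for every $U \subset \F_p$, the single-set bound
\[
\Bigl| \sum_{u,v \in U} \chi(u-v) \Bigr| \leq \delta\, p^{1/2}\, |U| = O\bigl(|U|^{3/2} \log^2 p\bigr).
\]
Repeating the decomposition from the proof of Theorem~\ref{thm-main} (writing the $S',T'$-sum as a linear combination of sums over $S' \cup T'$, $S' \setminus T'$ and $T' \setminus S'$) converts this into the two-set bound
\[
\Bigl| \sum_{s' \in S',\, t' \in T'} \chi(s' - t') \Bigr| = O\bigl((|S'|+|T'|)^{3/2} \log^2 p\bigr)
\]
for arbitrary $S', T' \subset \F_p$, which is the engine of the whole argument.

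Next, fix $\alpha > 0$ and pick any $\gamma \in (0, \alpha/2)$. Let $S, T \subset \F_p$ satisfy $|S|, |T| > p^\alpha$, and assume without loss of generality $|S| \leq |T|$. Set the block size $K_0 := \lceil c\, p^\gamma\, |S|^{1/2} \log^2 p \rceil$ for a suitably large absolute constant $c$. Since $|S| > p^{2\gamma}$ for all sufficiently large $p$, we have $K_0 \leq |S|$, so we may partition $T$ into $m \leq \lceil |T|/K_0 \rceil$ disjoint blocks $T_1, \ldots, T_m$ each of size at most $K_0$. For every block, $|S|+|T_i| \leq 2|S|$, so the two-set bound gives $|\sum_{s \in S, t \in T_i} \chi(s-t)| = O(|S|^{3/2} \log^2 p)$; summing over $i$ produces
\[
\Bigl| \sum_{s \in S,\, t \in T} \chi(s-t) \Bigr| = O\!\left( \frac{|T|\, |S|^{3/2} \log^2 p}{K_0}\right) = O\bigl( p^{-\gamma} |S|\, |T| \bigr),
\]
which is exactly $\mathcal{P}(\alpha, \gamma)$ and completes the proof.

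The main point that requires care is the calibration of $K_0$: it must be small enough that each union $S \cup T_i$ has size comparable to $|S|$ (so that Lemma~\ref{lem-tech} delivers a bound of order $|S|^{3/2}$ on every block), yet large enough that the number $|T|/K_0$ of blocks does not erase the cancellation. Both demands are simultaneously satisfiable precisely when $|S|^{1/2}$ dominates $p^\gamma \log^2 p$, which is why we require $\gamma < \alpha/2$ and why the corollary is obtained with any $\beta \in (0, \alpha/2)$.
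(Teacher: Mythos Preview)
Your proof is correct and takes a genuinely different route from the paper's. The paper fixes parameters $\theta<\alpha$, $\tau=\tfrac{1}{2}-\tfrac{3}{4}\theta$, $\gamma<\tau$ and then invokes the proof scheme of Theorem~\ref{thm-main} for pairs $S,T$ with both sizes below a chosen threshold, while falling back on Karatsuba's bound (Theorem~\ref{thm-Karatsuba}) whenever one of the two sets exceeds $p^{1/2+\varepsilon}$; the resulting $\beta$ is the worse of the two regimes, as in~\eqref{eq-beta}. Your argument dispenses with Karatsuba entirely: you first extract from Conjecture~\ref{conj-RIPconst} and Lemma~\ref{lem-tech} the uniform single-set bound $\bigl|\sum_{u,v\in U}\chi(u-v)\bigr|=O(|U|^{3/2}\log^2 p)$, pass to a two-set bound of order $(|S'|+|T'|)^{3/2}\log^2 p$ via the union/difference decomposition, and then---this is the new idea---partition the larger set $T$ into blocks of size $K_0\asymp p^{\gamma}|S|^{1/2}\log^2 p$ so that each $S\cup T_i$ has size comparable to $|S|$. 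Summing over blocks cancels the $|S|^{3/2}$ against $|T|/K_0$ and produces $O(p^{-\gamma}|S||T|)$ with any $\gamma<\alpha/2$. The payoff of your approach is a cleaner, self-contained argument that yields an explicit range $\beta\in(0,\alpha/2)$ and handles the unbalanced case $|S|\ll|T|$ directly, whereas the paper's parameter choice does not transparently cover that regime without the external input of Karatsuba. One minor caveat: when $|S|$ is very close to $p$ the conjectured $\delta_{|U|}$ may exceed $1$, so strictly speaking Lemma~\ref{lem-tech} is being used in the extended sense where $\delta_K$ is defined as the optimal constant in the two-sided inequality rather than via Definition~\ref{def-RIP}; this is harmless here but worth a sentence.
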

\begin{proof}
Let $\alpha>0$ be given.
Since the case for $\alpha>1/2+\varepsilon$, $\varepsilon>0$ is an arbitrarily small fixed constant, was covered in Theorem~\ref{thm-Karatsuba}, we shall assume that $0<\alpha\leq 1/2+\varepsilon$.
By (\ref{eq-tau}) it is possible to take $\theta, \tau, \gamma$ with
\begin{align}
\label{eq-parameter1}
\left\{
\begin{array}{ll}
0<\theta<\alpha \\
 \\
\tau=\frac{1}{2}-\frac{3}{4}\theta>0\\
 \\
0<\gamma<\tau
\end{array}
\right.
\end{align}
so that
\begin{align}
\label{eq-parameter2}
\frac{1}{2}-\tau+\gamma=\frac{3}{4}\theta+\gamma<\theta<\alpha.
\end{align}   
Notice that Conjecture~\ref{conj-RIPconst} now implies that the Paley ETF $\Phi_p$ has the $(\Theta(p^\theta), p^{-\tau})$ for each $\theta>0$ and $\tau=1/2-3\theta/4$. 
Hence the same line of the proof of Theorem~\ref{thm-main} yields that for each $\theta>0$ and any subsets $S, T \subset \F_p$ with $p^{1/2-\tau+\gamma}<|S|, |T|\leq K=\Theta(p^{\theta})$ (which is valid thanks to (\ref{eq-parameter2})), %we have
\begin{equation*}
   \Bigl |\sum_{s \in S, t \in T}\chi(s-t) \Bigr|
   \leq 4p^{-\gamma}|S||T|.
\end{equation*}
Since $p^{1/2-\tau+\gamma}<p^\alpha$ by (\ref{eq-parameter2}), the same line of the proof of Theorem~\ref{thm-main-intro} yields the property $\mathcal{P}(\alpha, \beta)$ where $\beta>0$ can be chosen as (\ref{eq-beta}) (and $\gamma$ is chosen so that (\ref{eq-parameter1}) and (\ref{eq-parameter2})).
\end{proof}

Finally, Proposition~\ref{prop-Ext} yields the following corollary, which is the second main result of this paper.

\begin{corollary}
\label{cor-4}
Suppose that Conjecture~\ref{conj-RIPconst} holds for sufficiently large primes $p\equiv 1 \pmod{4}$.
Let $n=\lceil \log_2 p \rceil$.
Then the Paley graph extractor $\Ext_p$ is a $2$-source $(\alpha n, p^{-\beta})$-extractor for {\rm any} $0<\alpha<1/2$ and some $\beta>0$, which is the statement of Conjecture~\ref{conj-PaleyExt}.
\end{corollary}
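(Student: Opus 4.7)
The plan is to chain Corollary~\ref{cor-3} and Proposition~\ref{prop-Ext} together; essentially all the analytic work has already been done, so this final corollary is a translation step. First I would fix any $0 < \alpha < 1/2$ together with $n = \lceil \log_2 p \rceil$. Invoking Corollary~\ref{cor-3} under the hypothesis that Conjecture~\ref{conj-RIPconst} holds for all sufficiently large primes $p \equiv 1 \pmod{4}$, one obtains the property $\mathcal{P}(\alpha, \beta)$ for some $\beta = \beta(\alpha) > 0$, meaning that for every pair of subsets $S, T \subset \mathbb{F}_p$ with $|S|, |T| > p^\alpha$ one has the character sum estimate
\begin{equation*}
   \Bigl| \sum_{s \in S,\, t \in T} \chi(s - t) \Bigr| \leq C p^{-\beta} |S||T|
\end{equation*}
for some absolute constant $C > 0$.

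Next I would plug this directly into Proposition~\ref{prop-Ext}, the Chor--Goldreich lemma, which converts precisely such a character sum bound into the statement that $\Ext_p$ is a $2$-source $(\alpha n, p^{-\beta})$-extractor. Since $\alpha$ was an arbitrary number in $(0, 1/2)$, and since the error $p^{-\beta}$ is exponentially small (hence negligible) as a function of $n$ by Remark~\ref{rem-negligible}, this is exactly the statement of Conjecture~\ref{conj-PaleyExt} restricted to the regime $\alpha < 1/2$.

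There is no serious obstacle here, since the heavy lifting has been performed in Theorem~\ref{thm-main-intro} and Corollary~\ref{cor-3}. The only minor point worth flagging is that the explicit constant $C$ in front of $p^{-\beta}|S||T|$ in Definition~\ref{def-pgc} must be absorbed before applying Proposition~\ref{prop-Ext}; this is harmless because for any $\beta' < \beta$ one has $C p^{-\beta} \leq p^{-\beta'}$ for all sufficiently large $p$, so replacing $\beta$ by any slightly smaller positive number preserves the conclusion and yields a clean $(\alpha n, p^{-\beta'})$-extractor statement. With this cosmetic adjustment, the chain Conjecture~\ref{conj-RIPconst} $\Rightarrow$ Corollary~\ref{cor-3} $\Rightarrow$ Proposition~\ref{prop-Ext} completes the proof.
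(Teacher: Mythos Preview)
Your proposal is correct and follows essentially the same route as the paper: the paper simply states that Proposition~\ref{prop-Ext} applied to the conclusion of Corollary~\ref{cor-3} yields Corollary~\ref{cor-4}, and you carry out exactly this chaining. Your additional remark about absorbing the constant $C$ into a slightly smaller exponent $\beta'<\beta$ is a nice clarification that the paper leaves implicit.
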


%Note that $p^{-\beta}$ is negligible as shown in Remark~\ref{rem-negligible}.

As a byproduct of Corollary~\ref{cor-4}, Conjecture~\ref{conj-RIPconst} also implies an upper bound for the clique number $\omega(G_p)$, say, $\omega(G_p)=o(p^{\alpha})$ for any $\alpha>0$; recall Remark~\ref{rem-clique}.

% \section{Problems}
% \begin{itemize}
%     \item It would be interesting if we could obtain a poly-logarithmic upper bound for $\omega(G_p)$ under a conditional estimation for the RIP constant (e.g. Conjecture~\ref{conj-RIPconst}).

%     \item It seems to be non-trivial to extend our results (Theorem~\ref{thm-main-intro}, Corollary~\ref{cor-4}) for any prime $p\equiv 3 \pmod{4}$.
%     We note that this extension would yield an improved estimation for the maximal size of {\it transitive} subtournaments in the {\it Paley tournament} $T_p$.
%     Here the Paley tournament $T_p$ is a tournament (i.e. an orientation of a complete graph) with vertex set $\F_p$ in which there is an arc from $x$ to $y$ if and only if $\chi(x-y)=1$.
%     Similar to the case for $\omega(G_p)$, numerical results (e.g. \cite{S1994}) suggest that the maximal size of transitive subtournaments in $T_p$ would be $O(\poly \log p)$ as well. 
%     Note that estimating the maximal size of {\it transitive} subtournaments is closely related to constructive lower bounds for the {\it oriented Ramsey number} (see \cite{T2012}).

% \end{itemize}

\section*{Acknowledgements}
This work was supported by JST, ACT-X Grant Number JPMJAX2109, Japan. This paper was written during an academic visit to Department of Mathematics, Emory University.
We appreciate Ayush Basu, Yujie Gu, Vojt\v{e}ch R\"{o}dl, Tsuyoshi Takagi and Kenji Yasunaga for valuable discussions.

\end{document}